\let\oldref=\ref
\def\ref#1{(\oldref{#1})}
\def\theenumi{\roman{enumi}}
\theoremstyle{plain}
\newtheorem{prop}{Proposition}[section]
\newtheorem{thm}[prop]{Theorem}
\newtheorem{lem}[prop]{Lemma}
\newtheorem{cor}[prop]{Corollary}
\newtheorem{ex}[prop]{Example}
\theoremstyle{definition}
\newtheorem{rem}[prop]{Remark}
\newif\ifSuppressEndOfProof\SuppressEndOfProoffalse
\def\p@figure{Fig. }
\def\p@enumii{}
\newcommand{\cc}{\EuScript{C}}
\newcommand{\cd}{\EuScript{D}}
\newcommand{\MS}{\mathcal{MS}}
\newcommand{\sms}{\mathcal{S}\MS}
\newcommand{\F}{\mathrm{Facets}}
\newcommand{\link}{\mathrm{link}}
\newcommand{\supp}{\mathrm{supp}}
\newcommand{\rnd}{\partial}
\newcommand{\C}[1]{C$_#1$}
\newcommand{\N}{\mathrm{N}}
\newcommand{\G}{\mathcal{G}}
\newcommand{\z}{\mathbb{Z}}
\newcommand{\ifof}{if and only if }
\newcommand{\se}{\subseteq}
\newcommand{\sm}{\setminus }
\renewcommand{\iff}{\Leftrightarrow}
\newcommand{\give}{$\Rightarrow$}
\newcommand{\rgive}{$\Leftarrow$}
\newcommand{\rg}{\rangle}
\renewcommand{\lg}{\langle}
\newcommand{\p}[1]{\left(#1\right)}
\newcommand{\f}[2]{\frac{#1}{#2}}
\newcommand{\tohi}{\emptyset}
\renewcommand{\b}[1]{\overline{#1}}
\newcommand{\tl}[1]{\widetilde{#1}}
\newcommand{\blt}{\bullet}
\begin{document}

\title{On Generalizations of Cycles and Chordality to Hypergraphs from an Algebraic Viewpoint}
\author{Ashkan Nikseresht and Rashid Zaare-Nahandi \\
\it\small Department of Mathematics, Institute for Advanced Studies in Basic Sciences,\\
\it\small P.O. Box 45195-1159, Zanjan, Iran.\\
\it\small E-mail: ashkan\_nikseresht@yahoo.com\\
\it\small E-mail: rashidzn@iasbs.ac.ir}
\date{}
\maketitle

\begin{abstract}
In this paper, we study the notion of chordality and cycles in hypergraphs from a commutative algebraic point of
view. The corresponding concept of chordality in commutative algebra is having a linear resolution. However,
there is no unified definition for  cycle or chordality in hypergraphs  in the literature, so we consider
several generalizations of these notions and study their algebraic interpretations. In particular, we
investigate the relationship between chordality and having linear quotients in some classes of hypergraphs. Also
we show that if $\cc$ is a hypergraph such that $\lg \cc \rg$ is a vertex decomposable simplicial complex or
$I(\b\cc)$ is squarefree stable, then $\cc$ is chordal according to one of the most promising definitions.
\end{abstract}

Keywords and Phrases:  squarefree monomial ideal, linear resolution, chordal clutter, cycle, simplicial complex. \\
\indent 2010 Mathematical Subject Classification: 13F55, 05E45.

                                        \section{Introduction}
Let $\cc$ be a \emph{hypergraph} on the vertex set $[n]=\{1,\ldots,n\}$, that is, a family of subsets of $[n]$
called \emph{hyperedges}. If all hyperedges of $\cc$ have the same cardinality $d+1$, as we always assume in this
paper, we call $\cc$ a \emph{uniform $d$-dimensional clutter} (\emph{$d$-clutter}, for short). In this case, the
hyperedges of $\cc$ are called \emph{circuits}. Also throughout this paper, $S=k[x_1,\ldots, x_n]$, where $k$ is a
field and $x_F=\prod_{i\in F}x_i$, for $F\se [n]$. Moreover, $I(\cc)$ denotes the \emph{circuit ideal} of $\cc=
\lg x_F|F\in \cc\rg$ and $\b{\cc}$ is the $d$-\emph{complement} of $\cc$, which means the family of all
$(d+1)$-subsets of $[n]$ not in $\cc$.

We call $d$-subsets of circuits of $\cc$ \emph{maximal subcircuits (MS)} and denote the set of maximal subcircuits
by $\MS(\cc)$. By degree of a maximal subcircuit $e$, $\deg(e)$, we mean  the number of circuits of $\cc$
containing $e$. For $L\se [n]$, we write $\cc-L$ for the subclutter of $\cc$ consisting of those circuits which do
not contain $L$. For the sake of simplicity, we write for example $abc$ for the subset $\{a,b,c\}$ of $[n]$ or
$ev$ for $e\cup \{v\}$ where $e\se [n]$ and $v\in [n]$.

A well-known theorem of Fr\"oberg says that in the case $d=1$ (that is, when $\cc$ is a graph), $I(\b{\cc})$ has a
linear resolution \ifof $\cc$ is a chordal graph. Recently many authors have tried to generalize the concept of
chordal graphs to clutters of arbitrary dimension in a way that Fr\"oberg's theorem remains true for $d>1$, see
for instance  \cite{E-chordal,vv-chordal, chordal,w-chordal,CF1}. A key part of this generalization is how to
define cycles in arbitrary clutters.

One such generalization is given and studied in Connon and Faridi \cite{CF2, CF1}. They say that $\tohi \neq \cc$
is a $d$-dimensional cycle (which we call a \emph{$d$-dimensional CF-cycle}), when $\cc$ is \emph{strongly
connected} (that is, for each $F_1, F_2\in \cc$, there is a sequence of circuits starting with $F_1$ and ending
with $F_2$ such that consecutive circuits share a common MS) and $\deg(e)$ is even for each $e\in \MS(\cc)$. Also
by a \textit{$d$-dimensional CF-tree} we mean a nonempty uniform $d$-dimensional clutter without any CF-cycle.

Another approach to chordal graphs is the notion of simplicial vertex. A vertex is called simplicial if its
neighbourhood forms a clique. Dirac in \cite{Dir} proved that a graph $G$ is chordal if and only if there is a
sequence of vertices $v_1, \ldots, v_r$ such that $v_1$ is simplicial in $G$ and for each $1<i\leq t$, $v_i$ is
simplicial in $G-v_1-\cdots - v_{i-1}$. Some authors use this approach to define chordal clutters. R. Woodroof in
\cite{w-chordal} defines simplicial vertex in an arbitrary clutter and uses this notion to define chordal
clutters, which we will call W-chordal.

Motivated by a result of \cite{all}, another definition of chordal clutters is proposed by Bigdeli, et al
\cite{chordal}. In a $d$-dimensional uniform clutter, they define the notion of ``simplicial'' for $d$-subcircuits
instead of vertices. An MS $e$ of $\cc$ is said to be \textit{simplicial}, if   $\N[e]= e\cup\{v\in
[n]|e\cup\{v\}\in \cc\}$ is a \emph{clique} in $\cc$ (that is, for all $F\se \N[e]$ with $|F|=d+1$, we have $F\in \cc$).
We denote the set of all simplicial maximal subcircuits of $\cc$ by $\sms(\cc)$. If there exists a sequence
$e_1,\ldots, e_t$ with $e_i\in \sms\p{ \cc-e_1- \cdots - e_{i-1} }$ such that $\cc-e_1 -\cdots -e_t= \tohi$, then
$\cc$ is said to be \textit{chordal}. In particular, the empty clutter is considered chordal.

Remark 3.10 of \cite{all} states that if $\cc$ is a chordal clutter, then $I(\b{\cc})$ has a linear resolution
over every field. It still remains a question whether the converse is also true. It is known that if a graded
ideal $I$  of $S$ generated in one degree has linear quotients, then it has a linear resolution (see
\cite[Proposition 8.2.1]{hibi}). Thus as part of studying the converse of \cite[Remark 3.10]{all} mentioned above,
one can investigate whether $\cc$ is chordal, given that $I(\b \cc)$ has linear quotients. In this regards, we
present some relations between chordality and having linear quotients in Section 2. As a particular case, we prove
that if $I(\b\cc)$ is squarefree stable, then $\cc$ is chordal.

In Section 3, we try to find some classes of CF-trees which are chordal. We prove that if $\lg \cc \rg$ is a
vertex decomposable simplicial complex, then $\cc$ is a CF-tree which is also chordal. Furthermore, we prove that
every CF-tree $\cc$ on at most $\dim \cc+3$ vertices is chordal.

In Section 4, we study three other generalizations of cycles to clutters. Using these notions of cycles, we
present several generalizations of chordal graphs to clutters and study the relation of these generalizations with
the condition that $I(\b{\cc})$ has a linear resolution and also with chordality of $\cc$. Before stating the main
results, we present a brief review of simplicial complexes.

\subsection*{A brief review of simplicial complexes}

A \emph{simplicial complex} on vertex set $V$ is a family $\Delta$ of subsets of $V$ (called \emph{faces }of
$\Delta$) such that if $A\se B\in \Delta$, then $A\in \Delta$. We always assume that the vertex set is
$[n]$, unless specified otherwise. For each $F\in \Delta$, $\dim F=|F|-1$ and $\dim \Delta= \max_{F\in
\Delta}\dim F$. The set of maximal faces of $\Delta$ which are called \emph{facets} is denoted by $\F(\Delta)$. If
$|\F(\Delta)|=1$, then $\Delta$ is called a \emph{simplex}.

If all facets of $\Delta$ have the same dimension --- as we always assume in the sequel --- we say that $\Delta$
is pure. In this case $\F(\Delta)$ is a $d$-dimensional uniform clutter. Also if $\cc$ is a clutter, then $\lg \cc
\rg$ denotes the simplicial complex $\Delta$ with $\F(\Delta)= \cc$. Another simplicial complex associated to a
$d$-clutter $\cc$ on $[n]$ is the \emph{clique complex} $\Delta(\cc)$ of $\cc$ defined as the family of all
subsets $L$ of $[n]$ with the property that $L$ is a clique in $\cc$. Note that all subsets of $[n]$ with size
$\leq d$ are cliques by assumption.

The ideal generated by $\{x_F|F$ is a minimal non-face of $\Delta\}$ is called the \emph{Stanley-Reisner ideal} of
$\Delta$ and is denoted by $I_\Delta$. When $\f{S}{I_\Delta}$ is a Cohen-Macaulay ring, $\Delta$ is called
Cohen-Macaulay over $k$ or $k$-Cohen-Macaulay. For a face $F$ of $\Delta$ and a set $L\se [n]$, we define
$\link_\Delta F= \{G\sm F|G\in \Delta\}$ and $\Delta|_L =\{G\in \Delta| G\se L\}$. Moreover, we call $\Delta$
\emph{$d$-complete} when $\Delta$ has all $(d+1)$-subsets of $[n]$. We call a family $\cc$ of $k$-dimensional
faces of $\Delta$ a \emph{$k$-cycle}, if $\cc$ is a CF-cycle as a clutter.

Let $A$ be a commutative ring with identity and denote by $\tl{C}_d(\Delta)= \tl{C}_d(\Delta, A)$ the free
$A$-module whose basis is the set of all $d$-dimensional faces of $\Delta$. Consider the $A$-homomorphism $\rnd_d:
\tl{C}_d(\Delta) \to \tl{C}_{d-1}(\Delta)$ defined by
 $$\rnd_d(\{v_0, \ldots, v_d\}) =\sum_{i=0}^d (-1)^i \{v_0, \ldots, v_{i-1},v_{i+1},\ldots, v_d\},$$
where $v_0<\cdots <v_d$. Then $(\tl{C}_\blt, \rnd_\blt)$ is a complex of $A$-modules and $A$-homomorphisms called
the \emph{augmented oriented chain complex} of $\Delta$ over $A$. We denote the $i$-th homology of this complex by
$\tl{H}_i(\Delta; A)$.

By \emph{Alexander dual} of a simplicial complex $\Delta$ we mean $\Delta^\vee=\{[n]\sm F|F\se [n],\,F\notin
\Delta\}$ and also we set $\cc^\vee=\{[n]\sm F\big| F\in \b{\cc}\}$. Then it follows from the Eagon-Reiner theorem
(\cite[Theorem 8.1.9]{hibi}) and the lemma below that $I(\b{\cc})$ has a linear resolution over $k$, \ifof $\lg
\cc^\vee \rg$ is Cohen-Macaulay over $k$. For more details on simplicial complexes and related algebraic concepts
the reader is referred to \cite{hibi}. We frequently use the following easy lemma in the sequel without any
further mention.

\begin{lem}\label{transition}
Let $\cc$ be a $d$-clutter.
\begin{enumerate}
\item $I(\b\cc)=I_{\Delta(\cc)}$.
\item $\lg \cc^\vee \rg= (\Delta(\cc))^\vee$.
\end{enumerate}
\end{lem}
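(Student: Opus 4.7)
The plan is to unwind the definitions of $\Delta(\cc)$, $\b\cc$ and the Alexander dual $(-)^\vee$, while keeping track of the convention that every subset of $[n]$ of size at most $d$ is automatically a clique of $\cc$.

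For part (i), I would identify the minimal non-faces of $\Delta(\cc)$ directly. Any non-face must have size at least $d+1$, since smaller sets are cliques by convention. A set $F$ with $|F|=d+1$ is a clique \ifof $F\in\cc$, so the non-faces of this size are precisely the elements of $\b\cc$. Any non-face of strictly larger size contains a $(d+1)$-subset not in $\cc$, hence properly contains a smaller non-face and so is not minimal. Therefore the minimal non-faces of $\Delta(\cc)$ are exactly the elements of $\b\cc$, which gives $I(\b\cc)=I_{\Delta(\cc)}$.

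For part (ii), I would show that the two simplicial complexes on $[n]$ have the same face sets. A subset $G\se [n]$ lies in $\lg \cc^\vee\rg$ \ifof $G\se [n]\sm F$ for some $F\in\b\cc$, i.e.\ \ifof $[n]\sm G$ contains some element of $\b\cc$. On the other hand $G\in(\Delta(\cc))^\vee$ \ifof $[n]\sm G\notin\Delta(\cc)$, i.e.\ \ifof $[n]\sm G$ is not a clique of $\cc$. Using the description of non-cliques from part (i), $[n]\sm G$ fails to be a clique \ifof it contains some $(d+1)$-subset not in $\cc$, i.e.\ some element of $\b\cc$, so the two conditions coincide.

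The only real obstacle is the bookkeeping around the convention on small cliques: it is what forces minimal non-faces of $\Delta(\cc)$ to have size exactly $d+1$ in part (i), and what makes the boundary case $|[n]\sm G|<d+1$ in part (ii) harmless (in that situation $[n]\sm G$ is automatically a clique, so $G$ belongs to neither of the two complexes). Once this observation is in hand, both parts reduce to straightforward definition chasing.
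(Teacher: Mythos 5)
Your proof is correct. Part (i) is essentially the paper's own argument: both identify the (minimal) non-faces of $\Delta(\cc)$ with the sets containing a circuit of $\b\cc$, with your write-up just making explicit the role of the convention that sets of size $\leq d$ are cliques. For part (ii) you take a slightly more self-contained route: you compare the two complexes face by face, directly from the definition of the Alexander dual, showing that $G$ lies in either complex exactly when $[n]\sm G$ contains a member of $\b\cc$. The paper instead compares only the facets, invoking the fact (the note after Lemma 1.5.2 of Herzog--Hibi) that the facets of $\Delta^\vee$ are the complements of the minimal non-faces of $\Delta$, and then reuses the description of minimal non-faces from part (i). Your version avoids that citation at the cost of a small amount of extra bookkeeping (including the harmless boundary case $|[n]\sm G|\leq d$, which you handle correctly); both arguments are equally valid.
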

\begin{proof}
\ref{1}: Just note that $F\se [n]$ is a non-face of $\Delta(\cc)$ \ifof $F$ is not a clique in $\cc$ \ifof $C\se
F$ for some $C\in \b\cc$.

\ref{2}: We show that facets of $(\Delta(\cc))^\vee$ are circuits of $\cc^\vee$. By the note after \cite[Lemma
1.5.2]{hibi}, the facets of $(\Delta(\cc))^\vee$ are the complements (with respect to $[n]$) of minimal non-faces
of $\Delta(\cc)$. But according to the proof of \ref{1}, minimal non-faces of $\Delta(\cc)$ are exactly the
circuits of $\b\cc$ and their complements form $\cc^\vee$ by definition.
\end{proof}

                                     \section{Chordality and Linear Quotients}
In this section we investigate the relationship between chordality of  $\cc$ and having linear quotients for
$I(\b\cc)$. Example 3.15 of \cite{chordal}, shows that not for all chordal clutters $\cc$, the ideal $I(\b\cc)$
has linear quotients. Thus we focus on the question ``Are all clutters $\cc$ with $I(\b \cc)$ having linear
quotients, chordal?''. Our first result shows that this question is equivalent to asking ``Do  all clutters $\cc$
with $I(\b \cc)$ having linear quotients, have an SMS?'' Moreover, this theorem establishes an interesting
relationship between orders of linear quotients of certain ideals and chordality of clutters $\cc$ with $I(\b\cc)$
having linear quotients.

In the sequel, by a \emph{complete clutter} we mean a clutter in which the set of vertices is a clique. Also here
we say that a squarefree monomial ideal $I$ generated in degree $d$ is \emph{complete}, when all squarefree
monomials of $S$ with degree $d$ are in $I$ or equivalently if $I=I(\cc)$ for the complete $(d-1)$-clutter with
vertex set $[n]$.

\def\theenumi{\Alph{enumi}}
\begin{thm}\label{2 main}
Let $d>0$ be an integer and consider the following propositions.
\begin{enumerate}
\item \label{2 main 1}  Every $d$-clutter $\cc$ with $I(\b \cc)$ having linear quotients is chordal.

\item \label{2 main 2}  Every non-empty $d$-clutter $\cc$ with $I(\b \cc)$ having linear quotients has an SMS.

\item \label{2 main 3} Every order of linear quotients for a non-complete squarefree monomial ideal $I$ of $S$
    generated in degree $d$, can be extended to an order of linear quotients for the complete squarefree
    monomial ideal of $S$ generated in degree $d$.

\end{enumerate}
Then \ref{2 main 1} $\iff$ \ref{2 main 2} \give\ \ref{2 main 3}.
\end{thm}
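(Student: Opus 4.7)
The plan is to prove the three implications in the order \ref{2 main 1} $\Rightarrow$ \ref{2 main 2}, then \ref{2 main 2} $\Rightarrow$ \ref{2 main 1} by induction on $|\cc|$, and finally \ref{2 main 2} $\Rightarrow$ \ref{2 main 3} by iterating the construction used in the inductive step.

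The direction \ref{2 main 1} $\Rightarrow$ \ref{2 main 2} is immediate: a non-empty chordal clutter, by definition, admits a sequence $e_1,\ldots,e_t$ of SMSs, whose first term $e_1$ lies in $\sms(\cc)$.

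For \ref{2 main 2} $\Rightarrow$ \ref{2 main 1}, I would induct on $|\cc|$. The base $\cc=\tohi$ is trivial. In the inductive step, hypothesis \ref{2 main 2} produces an SMS $e$ of $\cc$, and the crux is the following extension lemma: if $f_1,\ldots,f_m$ is a linear quotients order for $I(\b\cc)$ and $\N[e]\sm e=\{v_1,\ldots,v_k\}$, then $f_1,\ldots,f_m, x_{ev_1},\ldots, x_{ev_k}$ (in any fixed order of the $v_i$) is a linear quotients order for $I(\b{(\cc-e)})$. For each $j$ the new colon decomposes as $I(\b\cc) : x_{ev_j} + (x_{v_1},\ldots,x_{v_{j-1}})$, so only the first summand needs attention. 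Given a generator $x_{F\sm ev_j}$ of $I(\b\cc):x_{ev_j}$ with $F\in\b\cc$, the clique property of $\N[e]$ gives $F\not\se \N[e]$, so one can pick $u\in F\sm \N[e]$. The key observation is that $u\notin \N[e]$ forces $eu\in\b\cc$, whence $x_u\cdot x_{ev_j}=x_{euv_j}$ is divisible by $x_{eu}\in I(\b\cc)$, giving $x_u\in I(\b\cc):x_{ev_j}$. Since $x_u$ divides $x_{F\sm ev_j}$, the first summand is variable-generated, and the lemma follows. Applying the induction hypothesis to $\cc-e$ then gives $\cc$ chordal.

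For \ref{2 main 2} $\Rightarrow$ \ref{2 main 3}, the strategy is to iterate the extension lemma along a chordal sequence, invoking the just-established equivalence. A non-complete squarefree monomial ideal $I$ with linear quotients (in the relevant degree) can be written as $I=I(\b\cc)$ for a non-empty clutter $\cc$, and \ref{2 main 1} provides a chordal sequence $e_1,\ldots,e_t$ for $\cc$. Setting $\cc_0=\cc$ and $\cc_i=\cc_{i-1}-e_i$, the extension lemma applied inductively produces a linear quotients order of $I(\b{\cc_i})$ extending that of $I(\b{\cc_{i-1}})$, obtained by appending the new generators $x_{e_iv}$ for $v\in\N[e_i]_{\cc_{i-1}}\sm e_i$ (in any order). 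Since $\cc_t=\tohi$ forces $I(\b{\cc_t})$ to be the complete squarefree monomial ideal in the relevant degree, the final order is the desired extension of the given one.

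The main obstacle is the extension lemma inside \ref{2 main 2} $\Rightarrow$ \ref{2 main 1}, specifically verifying that $I(\b\cc):x_{ev_j}$ is generated by variables. The structural input that makes this work is the clique property of $\N[e]$: it guarantees, for every $u\notin\N[e]$, the existence of the witness $eu\in\b\cc$ supplying a variable in the colon ideal. Once this is in hand, the rest of the argument (including the bookkeeping in \ref{2 main 2} $\Rightarrow$ \ref{2 main 3}) is routine.
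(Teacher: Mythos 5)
Your treatment of \ref{2 main 1} $\iff$ \ref{2 main 2} is correct and is essentially the paper's argument. The new generators of $I(\b{\cc-e})$ are exactly the $x_{ev}$ with $v\in \N[e]\sm e$, and the witness you extract from the clique property of $\N[e]$ --- a vertex $u\in F\sm \N[e]$, so that $eu\in\b\cc$, for each $F\in\b\cc$ --- is precisely the element $l\in F_j\sm e$ with $el\in\b\cc$ used in the paper's ``proof of claim''; you verify the colon ideals directly where the paper invokes \cite[Corollary 8.2.4]{hibi}, but the content is the same, and your induction on $|\cc|$ matches the paper's iteration of SMS deletions.

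The proof of \ref{2 main 2} \give\ \ref{2 main 3} has a genuine gap coming from a degree mismatch. Statement \ref{2 main 3} concerns ideals generated in degree $d$, so its generators $x_{e_1},\ldots,x_{e_t}$ correspond to $d$-subsets of $[n]$, that is, to \emph{maximal subcircuits} of $d$-clutters, not to circuits of the complement of a $d$-clutter. If you insist on writing such an ideal as $I(\b\cc)$, then $\cc$ must be a $(d-1)$-dimensional clutter, and invoking \ref{2 main 1} to obtain a chordal sequence for $\cc$ requires the hypothesis \ref{2 main 2} in dimension $d-1$, which is not available: the theorem fixes one $d$ and assumes \ref{2 main 2} only for $d$-clutters. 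What your iteration of the extension lemma along a chordal sequence actually proves is that every linear quotient order of a non-complete squarefree ideal generated in degree $d+1$ extends to the complete ideal in degree $d+1$ --- a shifted version of \ref{2 main 3}. The paper closes this differently: it regards the $e_i$ as maximal subcircuits of the complete $d$-clutter $\cc$, shows that $x_{e_1},\ldots,x_{e_t}$ is a linear quotient order \ifof $e_i\in\sms'(\cc_{i-1})$ for every $i$ (where $\cc_i=\cc-e_1-\cdots-e_i$ and $\sms'$ also admits $d$-subsets lying in no circuit), deduces that $I(\b{\cc_t})$ has linear quotients, and then applies \ref{2 main 2} to the $d$-clutter $\cc_t$ to produce the next term $e_{t+1}$. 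Some such identification of degree-$d$ generators with maximal subcircuits (rather than circuits) is needed to get \ref{2 main 3} as stated for the given $d$.
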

\begin{proof}
\ref{2 main 1} \give\ \ref{2 main 2}: Trivial. \ref{2 main 2} \give \ref{2 main 1}: Assume that $I(\b\cc)$ has
linear quotients. We claim that for every $e\in \sms(\cc)$, the ideal $I(\b{\cc-e})$ has linear quotients. Then if
$\cc\neq \tohi$, by \ref{2 main 2}, $\cc-e$ has a SMS, say $e'$, and again $\cc-e-e'$ either is empty or has a SMS
by \ref{2 main 2} and hence we can continue deleting SMS's until we reach the empty clutter.

\emph{Proof of claim.} Let $I=I(\b \cc)$, $I'= I(\b{\cc-e})$ and suppose that $u_1, \ldots, u_m$ is the linear
quotient order  of the set of minimal generators of $I$, denoted by $\G(I)$. Then $\G(I')$ is the union of $\G(I)$
and the set of squarefree monomials $u$ of degree $d+1$ with $e\se \supp(u)=\{i\in [n]\big| x_i|u\}$. Denote the
elements of $\G(I')\sm \G(I)$ by $u_{m+1}, \ldots, u_t$. We show that $I'$ has linear quotients with respect to
the order $u_1, \ldots, u_t$. Set $F_i=\supp(u_i)$. By \cite[Corollary 8.2.4]{hibi}, we have to show that for each
$i$ and $j$ such that $j<i\leq t$, there are $l\in F_j\sm F_i$ and $k<i$ such that $F_k\sm F_i=\{l\}$. If $i\leq
m$, this property holds since $u_1, \ldots, u_m$ is an order of linear quotients. If $j\geq m+1$, then as $e\se
F_j\cap F_i$, we have $|F_j\sm F_i|=1$ and hence the required property again holds with $k=j$ and $l\in F_j\sm
F_i$.

So assume $j\leq m< i$. Suppose that for each $l\in F_j\sm e$, we have $el\in \cc$. Then $F_j\se \N[e]$ and since
$e$ is simplicial, we should have $F_j\in \cc$ which means $u_j\notin I(\b\cc)$, a contradiction. Therefore, there
is a $l\in F_j\sm e$ with $el\notin\cc$. So $el\in \b\cc$ and for some $k\leq m$, we should have $el=F_k$.
Consequently, this $k$ and $l$ satisfy the required property.

\ref{2 main 2} \give\ \ref{2 main 3}: Let $\cc$ be the complete $d$-clutter with vertex set $[n]$. We denote the
union of $\sms(\cc)$ with the set of $d$-subsets of $[n]$ not in any circuit of $\cc$ by $\sms'(\cc)$. So
$\sms'(\cc)$ is the set of all $d$-subsets of $[n]$ such that $\N[e]$ is a clique, whether $e\in \MS(\cc)$ or not.

Suppose that $x_{e_1}, x_{e_2}, \ldots, x_{e_t}$ is an order of linear quotients for the non-complete ideal $I=\lg
x_{e_1}, x_{e_2}, \ldots, x_{e_t}  \rg$, where $e_i$'s are $d$-subsets of $[n]$. We show that $e_i\in
\sms'(\cc_{i-1})$ for each $i\in [t]$ where $\cc_{i}=\cc-e_1-e_2-\cdots-e_i$. To see this, note that by
\cite[Corollary 8.2.4]{hibi},
 \centerline{\hfill for each $j<i$ there is a $k< i$ and $l\in [n]$ with $l\in e_j\sm e_i$ and
 $e_k\sm e_i=\{l\}$ \hfill ($*$)}
Therefore it follows that $e_k\se e_il$ and $l\notin \N_{\cc_{i-1}}[e_i]$. So $e_j\not\se \N_{\cc_{i-1}}[e_i]$ for
each $j<i$. Thus for each $F\se \N_{\cc_{i-1}}[e_i]$ with $|F|=d+1$, $F\in {\cc_{i-1}}$ and $\N_{\cc_{i-1}}[e_i]$
is a clique, as required.

Now since $I(\b\cc)$ has linear quotients, it follows from the proof of \ref{2 main 2} \give\ \ref{2 main 1} above
that $I(\b\cc_t)$ has linear quotients, too (note that if $e_i\in \sms'(\cc_{i-1})\sm \sms(\cc_{i-1})$, then
$\cc_i= \cc_{i-1}$). If $\cc_t\neq \tohi$, let $e_{t+1}\in \sms(\cc_t)$, which is not empty by \ref{2 main 2}. If
$\cc_t=\tohi$, then as $I$ is not complete, there is a $d$-subset $e_{t+1}$ of $[n]$ such that $x_{e_{t+1}}\notin
I$. Note that in both cases $e_{t+1}\in \sms'(\cc_t)$. Therefore $e_i\in \sms'(\cc_{i-1})$ for each $i\in [t+1]$
where $\cc_{i}=\cc-e_1-\cdots-e_i$. We show that $x_{e_1}, \ldots, x_{e_{t+1}}$ is an order of linear quotients
for $I'= \lg x_{e_1}, \ldots, x_{e_{t+1}}  \rg$.

Suppose $j<i$. If $e_j\se \N_{\cc_{i-1}}[e_i]$, then for $y\in e_i\sm e_j$, we have $F=e_jy\notin \cc_{i-1}$ but
$F\se \N_{\cc_{i-1}}[e_i]$, contradicting simpliciality of $e_i$. Thus there exists an $l\in e_j\sm
\N_{\cc_{i-1}}[e_i]$. Hence $e_il$ is not a face of $\cc_{i-1}$, that is, there is a $k< i$ with $e_k\se e_il$.
Consequently, $e_k\sm e_i=\{l\}$. Therefore $(*)$ above holds and the claim follows from \cite[Corollary
8.2.4]{hibi}.

If $I'$ is not complete, then by the same argument we can extend the order of linear quotients of $I'$ to one for
$I'+\lg x_{e_{t+2}} \rg $ for some $d$-subset $e_{t+2}$ of $[n]$. Continuing this way, in each step we extend this
order of linear quotients by one, until we reach the complete ideal and the result follows.
\end{proof}

\def\theenumi{\roman{enumi}}
Observe that the following statements were indeed proved in the proof of the previous theorem.
\begin{rem}\label{lin q and sms seq}
Let $\cc$ be a $d$-clutter, $E$ be the set of $d$-subsets of $[n]$ and $\sms'(\cc)$ be as defined in the proof of
\ref{2 main}.
\begin{enumerate}
\item \label{1} Suppose that $I(\b \cc)$ has linear quotients and that $e\in \sms(\cc)$. Then $I(\b{\cc-e})$ has
    linear quotients.
\item \label{2} Assume that $\cc$ is the complete $d$-clutter with vertex set $[n]$. For a sequence $e_1, e_2,
    \ldots, e_t$ of different elements in $E$ the following are equivalent:
\begin{enumerate}
\item $e_i\in \sms'(\cc_{i-1})$ for each $i\in [t]$, where $\cc_{i}=\cc-e_1-e_2-\cdots-e_i$.

\item $x_{e_1}, x_{e_2}, \ldots, x_{e_t}$ is an order of linear quotients for $\lg x_{e_1}, x_{e_2}, \ldots,
    x_{e_t}  \rg$.
\end{enumerate}

\end{enumerate}
\end{rem}

It should be noted that \ref{lin q and sms seq} provides another proof of \cite[Lemma 3.11]{chordal} which states
that every complete clutter is chordal.

Due to the above result, we try to find some subclasses of ideals with linear quotients whose corresponding
clutter has an SMS. One such class is the set of squarefree polymatroidal ideals. A monomial ideal $I$ is called
\emph{polymatroidal}, when all elements  of $\G(I)$ have the same degree and if $u=x^{a_1} \cdots x^{a_n}, v=
x^{b_1} \cdots x^{b_n} \in \G(I)$ with $a_i>b_i$ for some $i$, then there is a $j$ with $a_j<b_j$ such that
$x_ju/x_i\in \G(I)$. According to the symmetric exchange theorem (\cite[Theorem 12.4.1]{hibi}), when $I$ is
polymatroidal and $u,v,i$ are as above, then we can find a $j$ with $a_j<b_j$ and not only $x_ju/x_i\in \G(I)$ but
also $x_iv/x_j\in \G(I)$. Also by \cite[Theorem 12.6.2]{hibi}, polymatroidal ideals have linear quotients.

\begin{prop}
Suppose that $\cc\neq \tohi$ and $I(\b\cc)$ is polymatroidal. Then $\sms(\cc)\neq \tohi$.
\end{prop}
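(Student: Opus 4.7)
The plan is to identify the underlying matroid structure. A squarefree polymatroidal ideal generated in a fixed degree is, via the polymatroidal exchange property, precisely an ideal whose squarefree generators correspond to the bases of a matroid: the symmetric exchange condition for $\G(I)$ translates literally into Whitney's basis exchange axiom. Applied to $I(\b\cc)$, this says that $\b\cc$ is the set of bases of a matroid $M$ on $[n]$ of rank $d+1$, and $\cc$ is exactly the collection of dependent $(d+1)$-subsets of $[n]$ in $M$.

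The core step is to produce an independent $d$-set $e$ in $M$ whose matroid closure $\mathrm{cl}_M(e)$ strictly contains $e$; I will then show that such an $e$ is simplicial. Since $\cc \neq \tohi$, $M$ admits a (matroid) circuit $C$ with $|C| \leq d+1$. Fix $v \in C$ and set $e_0 = C \sm \{v\}$, which is independent (proper subsets of a matroid circuit are independent) and satisfies $v \in \mathrm{cl}_M(e_0)$. If $|e_0| = d$ take $e = e_0$; otherwise extend $e_0$ to an independent $d$-set $e$ using the augmentation axiom, which is available because $\mathrm{rk}(M) = d+1 \geq d$. The extension cannot introduce $v$---adjoining $v$ to $e_0$ would recreate the dependent set $C$---so $v \in \mathrm{cl}_M(e) \sm e$.

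Now I claim that $e \in \sms(\cc)$. Since $\mathrm{rk}_M(e) = d$, for every $w \in [n] \sm e$ the $(d+1)$-set $ew$ lies in $\cc$ iff it is dependent in $M$ iff $w \in \mathrm{cl}_M(e)$. Hence $\N[e] = \mathrm{cl}_M(e)$, and the existence of $v$ already gives $ev \in \cc$, showing $e \in \MS(\cc)$. Moreover, $\mathrm{cl}_M(e)$ is a flat of rank $d$ in $M$, so every $(d+1)$-subset of $\mathrm{cl}_M(e)$ has rank at most $d$, is a non-basis of $M$, and therefore lies in $\cc$. Thus $\N[e]$ is a clique in $\cc$ and $e$ is a simplicial maximal subcircuit.

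The only point that demands any care is the initial identification of $\b\cc$ with the set of bases of a matroid; once that dictionary is in place, the remainder of the argument is a short application of elementary matroid facts (augmentation, flats, and the equivalence of dependence of $ew$ with membership of $w$ in $\mathrm{cl}_M(e)$), and no further combinatorial manipulation is required.
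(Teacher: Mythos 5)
Your proof is correct, but it takes a genuinely different route from the paper's. You translate the squarefree polymatroidal exchange property into Whitney's basis-exchange axiom and work inside the resulting rank-$(d+1)$ matroid $M$ whose bases are $\b\cc$; the simplicial MS is produced as an independent $d$-set $e$ with $\mathrm{cl}_M(e)\supsetneq e$, and the clique property of $\N[e]=\mathrm{cl}_M(e)$ falls out of the fact that a rank-$d$ flat contains no basis. The paper instead argues directly with the monomial exchange condition: it picks $e\in\MS(\cc)\cap\MS(\b\cc)$ (which exists because the complete clutter is strongly connected), and shows $\N_\cc[e]$ is a clique by taking $A\se\N_\cc[e]$ with $A\in\b\cc$ and $|A\cap e|$ maximal and invoking the symmetric exchange theorem to trade an element of $A\sm e$ for one of $e\sm A$, contradicting maximality. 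The two choices of $e$ coincide (an independent $d$-set with $\mathrm{cl}_M(e)\sm e\neq\tohi$ is exactly an element of $\MS(\cc)\cap\MS(\b\cc)$), but your flat argument replaces the paper's maximality-plus-exchange step and is arguably more transparent; the price is the identification of squarefree polymatroidal ideals with matroid basis systems, which is standard but should be cited or verified. One small omission: your dictionary requires $\b\cc\neq\tohi$, since a matroid must have at least one basis, so the argument is vacuous when $\cc$ is the complete clutter. The paper disposes of that case first; you should add the one-line observation that every MS of a complete clutter is simplicial before constructing $M$.
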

\begin{proof}
If $\cc$ is complete then by \cite[Corollary 3.12]{chordal}, $\cc$ is chordal. Thus we assume that $\b\cc\neq
\tohi$. Obviously if $\cc'$ is the complete $d$-clutter on $[n]$, then $\cc'$ is strongly connected. Thus as $\cc
\cup \b\cc=\cc'$, we conclude that there is an $e\in \MS(\cc)\cap \MS(\b\cc)$. We show that $e\in \sms(\cc)$.

Assume $A\se \N_\cc[e]$, $A\in \b\cc$ and $|A\cap e|$ is maximum possible. If $|A\cap e|=d$, then $A=ev$ for some
$v\in \N_\cc[e]$, a contradiction. Suppose $|A\cap e|<d$. Since $e\in \MS(\b\cc)$, there is a $F\in \b\cc$
containing $e$. Let $i\in e\sm A$. Then by the  $I(\b\cc)$ being polymatroidal and by the symmetric exchange
theorem, we deduce that there exists a $j\in A\sm F$ such that $A'= A\sm \{j\}\cup \{i\}\in \b\cc$. Note that
$j\notin e$, so $|A' \cap e|> |A\cap e|$ and also $A'\se A\cup e\se \N_\cc[e]$. This is against the choice of $A$.
We conclude that no such $A$ exists, that is, $\N_\cc[e]$ is a clique, as required.
\end{proof}

We cannot deduce from this proposition that if $I(\b\cc)$ is polymatroidal, then $\cc$ is chordal, since
$I(\b{\cc-e})$ may not be polymatroidal for any $e\in \sms(\cc)$, as the following example shows.
\begin{ex}
Let $\cc=\{145, 245,345\}$ on $[5]$. Then it is straightforward to check that $I(\b\cc)$ is polymatroidal. Note
that $\sms(\cc)=\{ab|a=1,2,3, \, b=4,5\}$. Thus for every $e\in\sms(\cc)$, $I(\b{\cc-e})$ is not polymatroidal.
For example if $e=14$, $F_1=145$ and $F_2=234$, then $F_1, F_2\in \b{\cc-e}$ and $1\in F_1\sm F_2$ but there is no
$j\in F_2\sm F_1$ with $F_1\sm \{1\} \cup\{j\}\in \b{\cc-e}$.  Despite this, $e'=34\in \sms(\cc-e)$, $e''=24\in
\sms(\cc-e-e')$, $\cc-e-e'-e''=\tohi$, so that $\cc$ is chordal.
\end{ex}

For a monomial $u$ let $m(u)$ denote the largest $i$ with $x_i|u$. Another class of ideals with linear quotients
are \emph{squarefree stable ideals}, that is, squarefree monomial ideals such as $I$ with the property that for
each squarefree monomial $u\in I$ and for each $j<m(u)$ with $x_j \not| u$ one has $x_ju/x_{m(u)}\in I$. Next we
show that if $I(\b\cc)$ is squarefree stable, then $\cc$ has an SMS and even more, $\cc$ is chordal.

\begin{thm}\label{sq f sta}
Assume that $I(\b\cc)$ is squarefree stable, then $\cc$ is chordal.
\end{thm}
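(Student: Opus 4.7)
The plan is to induct on $|\cc|$. The base case $\cc = \tohi$ is immediate. For the inductive step I will exhibit a specific SMS $e$ of $\cc$ for which $I(\b{\cc - e})$ is again squarefree stable; the induction then closes, since $\cc - e$ has strictly fewer circuits (it loses at least the circuit through which $e$ was chosen) and the SMS sequence supplied by the induction hypothesis can be prepended with $e$.

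The candidate is $e := F^* \sm \{\max F^*\}$, where $F^*$ is the circuit of $\cc$ whose sorted tuple is lexicographically smallest. Write $F^* = \{a_1 < a_2 < \cdots < a_{d+1}\}$, so $e = \{a_1, \ldots, a_d\}$ and in particular $e \in \MS(\cc)$. Lex-minimality of $F^*$ forces $\N_\cc[e] \sm e \se \{v : v \geq a_{d+1}\}$: for any $v < a_{d+1}$ with $v \notin e$, the sorted tuple of $e \cup \{v\}$ is lex-less than $F^*$, hence lies in $\b\cc$, contradicting $v \in \N_\cc[e]$. To show $\N_\cc[e]$ is a clique in $\cc$, suppose toward a contradiction $F' \in \b\cc$ with $F' \se \N_\cc[e]$, and decompose $F' = A \cup B$ with $A := F' \cap e$ and $B := F' \sm e$. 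Every element of $B$ is at least $a_{d+1}$, hence strictly larger than every element of $e$. If $|B| = 1$ then $A = e$ and $F' = e \cup \{v\} \in \cc$, absurd. Otherwise pick $j \in e \sm A$; it satisfies $j \leq a_d < \max B = \max F'$ and $j \notin F'$, so squarefree stability produces $F'' := (F' \sm \{\max F'\}) \cup \{j\} \in \b\cc$, still contained in $\N_\cc[e]$ and with $|F'' \cap e| = |A| + 1$. Iterating reduces to the $|B|=1$ case and hence to a contradiction, so $e \in \sms(\cc)$.

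It then remains to check that $I(\b{\cc - e})$ is squarefree stable. Its generators are those of $I(\b\cc)$ together with the new monomials $x_{e \cup \{v\}}$ for $v \in \N_\cc[e] \sm e$; squarefree stability for the old generators is inherited from $I(\b\cc)$. For a new generator $F' = e \cup \{v\}$, the maximum of $F'$ is $v$ (since $v \geq a_{d+1} > \max e$), and replacing $v$ by any smaller $j \notin F'$ yields $e \cup \{j\}$, which still contains $e$ and hence lies in $\b{\cc - e}$. I expect the clique verification in the previous paragraph to be the main technical point, as it is the step where the lex-minimality of $F^*$ and the squarefree-stable shifting have to be played off one another in just the right way; the preservation of stability under $\cc \mapsto \cc - e$ and the inductive bookkeeping are then essentially routine.
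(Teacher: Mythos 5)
Your proof is correct and takes essentially the same route as the paper's: the paper's greedy choice of $e=a_1\cdots a_d$ (each $a_i$ minimal so that $a_1,\ldots,a_i$ lie in a common circuit) is precisely the first $d$ elements of your lex-smallest circuit $F^*$, and both arguments use squarefree stability to trade the largest vertex of a putative non-clique witness in $\N[e]$ for a vertex of $e$, then check stability of $I(\b{\cc-e})$ by the same case split on whether the circuit contains $e$. The only cosmetic difference is that the paper replaces your explicit iteration by choosing the witness $F\in\b\cc$ with $|F\cap e|$ maximal.
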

\begin{proof}
Since the empty clutter is chordal by definition, we can assume that $\cc\neq \tohi$. For each $1\leq i\leq d$,
let $a_i$ be the minimum number $\neq a_1, \ldots, a_{i-1}$ in $[n]$, for which there is a $F\in \cc$ with $a_1,
\ldots, a_i\in F$. Set $e=a_1\cdots a_d$. Then $e\in \MS(\cc)$. We show that $e\in \sms(\cc)$ and $I(\b{\cc-e})$
is squarefree stable. Then the result follows by induction.

Suppose that $F\se \N[e]$, $F\in \b\cc$ and $|F\cap e|$ is maximum possible. By the way we chose $a_i$'s, we know
that for each $b\in \N[e]\sm e$, we have $b>a_d>\cdots> a_1$, in particular, $m(x_F)\notin e$. If $|F\cap e|=d$,
then $F=eb$ for some $b\in \N[e]$, that is, $F\in \cc$ a contradiction. Thus we can assume that $|F\cap e|<d$.
Hence $e\sm F\neq \tohi$. Set $b'=\min e\sm F$. Then as $x_F\in I(\b\cc)$ which is squarefree stable, we deduce
that $F'= (F\sm \{m(x_F)\})\cup \{b'\}\in \b\cc$. Note that $F'\cap e$ strictly contains $F\cap e$ and as $b'\in
e$, we have $F'\se F\cup\{b'\}\se \N[e]$. So by maximality of $|F\cap e|$, we should have $F'\in \cc$, a
contradiction. From this contradiction we conclude that $F\in \cc$ and hence $e\in \sms(\cc)$.

Next suppose that $F\in \b{\cc-e}$ with $b=m(x_F)$ and let $b'<b$ with $b'\notin F$ and $F'=F\sm \{b\}\cup
\{b'\}$. We should prove that $F'\in \b{\cc-e}$, to show that $I(\b{\cc-e})$ is squarefree stable. If $F\notin
\cc$, then $F\in \b\cc$ and it follows that $F'\in \b\cc\se \b{\cc-e}$. If $F\in \cc$, then we should have $e\se
F$ and hence $b\notin e$. Consequently, $e\se F'$ which results to $F'\notin \cc-e$. Hence $F'\in \b{\cc-e}$ and
the result is concluded.
\end{proof}

Recall that a squarefree monomial ideal $I$ is called \emph{squarefree strongly stable}, when for each squarefree
monomial $u\in I$ and for each $j<i$ with $x_j \not| u$ and $x_i|u$ one has $x_j u/x_i\in I$. Clearly each
squarefree strongly stable ideal is squarefree stable and hence we can apply the previous result on such ideals,
too. But indeed, we can say more on chordality of squarefree strongly stable ideals. To see this, we need to
recall another concept which generalizes chordal graphs.

Let $\cd$ be a clutter on $[n]$ which is not necessarily uniform (that is the $\cd$ is a family of incomparable
subsets of $[n]$ which may have different sizes). For $v\in [n]$ by $\cd/v$ (\emph{contraction} on $v$) we mean
the clutter of minimal elements of $\{F\sm \{v\}|F\in \cd\}$. In \cite{w-chordal}, a vertex $v$ of $\cd$ is called
\emph{simplicial}, when from $v\in F_1, F_2\in \cd$ we can deduce that there is a $F_3\in \cd$ with $F_3\se
(F_1\cup \F_2)\sm \{v\}$. Moreover in \cite{w-chordal},  Woodroofe calls $\cd$ chordal (here we call $\cd$
\emph{W-chordal}), when every clutter arising from $\cd$ by a series of vertex deletions and contractions,
contains a simplicial vertex.

\begin{prop}
Assume that $I(\cc)$ is squarefree strongly stable. Then $\cc$ is both chordal and W-chordal.
\end{prop}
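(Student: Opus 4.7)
The plan is first to translate the algebraic hypothesis into a combinatorial shift condition on $\cc$: squarefree strong stability of $I(\cc)$ is equivalent to saying that $\cc$ is closed under shift-down, i.e.\ whenever $F\in\cc$ and $j<i$ with $j\notin F$, $i\in F$, one has $F\sm\{i\}\cup\{j\}\in\cc$. Equivalently, $\lg\cc\rg$ is a pure shifted simplicial complex. I will then handle chordality and W-chordality by essentially independent arguments.

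For W-chordality, the central claim is that the maximum vertex $n$ of $\cc$ is simplicial: if $F_1,F_2\in\cc$ are distinct and both contain $n$, pick any $j\in F_2\sm F_1$ (necessarily $j<n$); shift-down gives $F_3=F_1\sm\{n\}\cup\{j\}\in\cc$ with $F_3\se(F_1\cup F_2)\sm\{n\}$. I will then induct on the number of deletion/contraction operations used to form a minor. Deletions preserve squarefree strong stability on the reduced vertex set trivially. For a contraction $\cc/v$, the ideal $I(\cc/v)$ has generators of two sizes --- the size-$d$ sets $F\sm\{v\}$ for $F\in\cc$ with $v\in F$, and the minimal size-$(d+1)$ sets $G\in\cc$ with $v\notin G$. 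A direct argument using shift-down in $\cc$ shows that every such minimal $G$ satisfies $\max G<v$ (else shifting $\max G$ down to $v$ would produce a size-$d$ element in $\cc/v$ strictly inside $G$), and squarefree strong stability is then inherited by $I(\cc/v)$; the same max-vertex argument then produces a simplicial vertex in each minor.

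For chordality, I will use that pure shifted complexes are vertex decomposable (a classical fact), combined with the forthcoming result of Section~3 that vertex decomposability of $\lg\cc\rg$ implies $\cc$ is chordal. An alternative self-contained construction is to order the facets of $\cc$ in reverse lexicographic order $F_1>F_2>\cdots$ and, whenever $F_k$ is still present in the current clutter, use $e_k=F_k\sm\{\min F_k\}$ as the next SMS. The shift-down property together with the rev-lex ordering allows one to verify inductively that $\N[e_k]$ in the current clutter equals $e_k\cup[\min F_k]$ and is a clique, so that $e_k$ is indeed simplicial.

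The main obstacle I expect is the contraction step in the W-chordality argument: the mixed-degree ideal $I(\cc/v)$ is more delicate than the deletion $I(\cc\sm v)$, and the inheritance of squarefree strong stability there hinges on carefully exploiting the bound $\max G<v$ for the minimal size-$(d+1)$ generators. The chordality argument, if pursued via the reverse-lex construction rather than via vertex decomposability, similarly requires bookkeeping of which facets have been removed by earlier SMSs.
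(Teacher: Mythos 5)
Your W-chordality argument is essentially the paper's: both reduce to showing that (a suitably generalized, non-equigenerated notion of) squarefree strong stability of $I(\cd)$ survives vertex deletion and contraction, and that the largest remaining vertex of any minor is then simplicial. For the contraction step the paper does not need your bound $\max G<v$ on the minimal $(d+1)$-element circuits of $\cd/v$: given $F\in\cd/v$ with $i\in F$, $j<i$, $j\notin F$, it just distinguishes the cases $F\in\cd$ and $F\cup\{v\}\in\cd$, shifts inside $\cd$ in either case, and passes back to a minimal element of $\cd/v$. Your observation about $\max G$ is true but superfluous, and the phrase ``is then inherited'' is precisely where that short case analysis still has to be written out.

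For chordality the two arguments genuinely diverge. The paper simply notes that $I(\b\cd)$ is squarefree strongly stable with respect to the \emph{reverse} order on $[n]$, hence squarefree stable in that order, so Theorem~\ref{sq f sta} applies at once; this keeps the proof self-contained within Section~2. Your route (a) --- $\lg\cc\rg$ is a pure shifted complex, pure shifted complexes are vertex decomposable (shed the largest vertex; the link and the deletion remain pure and shifted), and vertex decomposability implies chordality by the corollary in Section~3 --- is correct, but it imports an external classical fact and relies on a result proved later in the paper. Your route (b), as stated, rests on a false invariant: for the complete $d$-clutter, under the usual convention the first facet processed is $F_1=\{1,\dots,d+1\}$, so $e_1=\{2,\dots,d+1\}$ and $\N[e_1]=[n]$, not $e_1\cup[\min F_1]=\{1,\dots,d+1\}$. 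The set $e_1$ is still simplicial there (its closed neighbourhood is a clique even though it is much larger than you claim), but the inductive statement you propose to verify is not the one that holds, so route (b) would have to be reworked. I would either adopt the paper's reverse-order reduction to Theorem~\ref{sq f sta} or stick with route (a).
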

\begin{proof}
More generally, let $\cd$ be a (not necessarily uniform) clutter with $I(\cd)$ squarefree strongly stable. It is
easy to show that for any $v\in [n]$, $I(\cd-v)$ is squarefree strongly stable. We first show that $I(\cd/v)$ is
also strongly stable. Suppose that $F\in \cd/v$, $i\in F$, $j\notin F$ and $j<i$. We should prove that there is a
$F'\in \cd/v$ with $F'\se F\cup \{j\}\sm \{i\}$. If $F\in \cd$, by $I(\cd)$ being squarefree strongly stable,
there is a $F''\in \cd$ satisfying this containment and so there is a $F'\in \cd/v$ with $F'\se F''$ and we are
done. If $F\notin \cd$, then $F\cup\{v\}\in \cd$, so there is a $F''\in \cd$ with $F''\se F\cup \{j,v\}\sm \{i\}$.
Again there is a $F'\in \cd/v$ with $F'\se F''\sm\{v\}$, as required.

It is easy to see that if $m$ is the maximum integer appearing in the union of all circuits of $\cd$, then $m$ is
a simplicial vertex. Thus $\cd$ is W-chordal. Furthermore, one could easily check that $I(\b\cd)$ is squarefree
strongly stable with respect to the reverse order on $[n]$. Consequently, if $\cd$ is also uniform, then by
\ref{sq f sta}, $\cd$ is chordal.
\end{proof}
It should be mentioned that in the above result, chordality of $\cc$ also follows its W-chordality, according to
\cite[Proposition 3.8]{chordal}.


Next we investigate, ideals with linear quotients which are the vertex cover ideal of some graph. Recall that the
\emph{vertex cover ideal} of a graph $G$ on $[n]$ is the ideal generated by all $x_C$'s where $C$ is a minimal
vertex cover of $G$ and as in \cite{hibi}, we denote this ideal by $I_G$. In the next result, note that
$\F(\Delta(\b G))$ is the clutter whose circuits are all maximal cliques of $\b G$ or equivalently all maximal
independent sets of $G$.

\begin{thm}\label{ver cover ideal}
Suppose that $\cc\neq \tohi$ and $I(\b\cc)=I_G$ for a graph $G$ with the property that $\F(\Delta(\b G))$ is
strongly connected. Then $\sms(\cc)\neq \tohi$.
\end{thm}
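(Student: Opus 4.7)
The plan is to translate the hypothesis via Alexander duality and then explicitly construct a simplicial maximal subcircuit from two adjacent facets of $\Delta(\b G)$. By \ref{transition}, the Stanley--Reisner complex of $I(\b\cc)=I_G$ is $\Delta(\cc)$, whose minimal non-faces are exactly the generators of $I_G$, namely the minimal vertex covers of $G$. Therefore every circuit of $\b\cc$ is a minimal vertex cover of $G$, and moreover $\Delta(\cc)=(\Delta(\b G))^\vee$; since the minimal non-faces of the independence complex $\Delta(\b G)$ are just the edges of $G$, the maximal cliques of $\cc$ are exactly the sets $[n]\sm\{u,v\}$ with $uv\in E(G)$.

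Before the main argument I would dispose of two degenerate subcases. If $\b\cc=\tohi$, then $\cc$ is complete and has an SMS by \cite[Corollary 3.12]{chordal}. If $\b\cc=\{C_0\}$, then for any $d$-subset $f\se C_0$ with $\{u\}=C_0\sm f$ one directly checks $\N[f]=[n]\sm\{u\}$, which is a clique of $\cc$ since no $(d+1)$-subset avoiding $u$ can equal $C_0$; thus $f\in\sms(\cc)$.

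Assume henceforth $|\b\cc|\geq 2$. Strong connectedness of $\F(\Delta(\b G))$ then forces two distinct facets $F_1,F_2$ with $|F_1\cap F_2|=n-d-2$, so the complements $C_1=[n]\sm F_1$ and $C_2=[n]\sm F_2$ are minimal vertex covers with $|C_1\cap C_2|=d$. Writing $\{u\}=C_1\sm C_2$, $\{v\}=C_2\sm C_1$, and $f=C_1\cap C_2$, the crucial combinatorial step is to prove $uv\in E(G)$: by minimality of $C_1$, the set $C_1\sm\{u\}$ fails to cover some edge of $G$, which must therefore have the form $uw$ with $w\notin C_1$; since $C_2=f\cup\{v\}$ covers $uw$ but does not contain $u$, we need $w\in C_2$, and as $w\notin f\se C_1$ we conclude $w=v$. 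Hence $uv\in E(G)$ and $[n]\sm\{u,v\}$ is a maximal clique of $\cc$.

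To complete the proof I verify $f\in\sms(\cc)$. Since $fu=C_1$ and $fv=C_2$ both belong to $\b\cc$, neither $u$ nor $v$ lies in $\N[f]$, so $\N[f]\se[n]\sm\{u,v\}$, which is a clique by the previous step. It remains to show $f\in\MS(\cc)$: in this case $n\geq d+3$ (the only way $n=d+2$ can support two distinct minimal vertex covers of size $d+1$ is for $G$ to consist of a single edge, which forces $d=0$), so one can pick $w\in[n]\sm(\{u,v\}\cup f)$ and then $fw\se[n]\sm\{u,v\}$ gives $fw\in\cc$. The main obstacle is the step $uv\in E(G)$, where the minimality of the vertex covers is the essential ingredient converting adjacency of two facets of $\Delta(\b G)$ into a genuine edge of $G$; the remainder is routine Alexander-duality bookkeeping.
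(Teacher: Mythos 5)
Your proof is correct and follows essentially the same route as the paper's: both use strong connectedness to produce two adjacent facets of $\Delta(\b G)$, take the candidate MS to be the intersection $f=C_1\cap C_2$ of the two corresponding minimal vertex covers, show the two leftover vertices $u,v$ form an edge of $G$, and conclude that $\N[f]\se [n]\sm\{u,v\}$ is a clique (the paper derives $uv\in E(G)$ from maximality of the independent sets $A_i=[n]\sm C_i$ rather than from minimality of the covers, but these are dual formulations of the same step). One small correction: your parenthetical justification that $n\geq d+3$ is false as stated --- $G=K_3$ admits two distinct minimal vertex covers of size $d+1$ with $n=d+2$ and $d=1$ --- but the conclusion is still forced by the hypothesis $\cc\neq\tohi$: if $n=d+2$, then every $(d+1)$-subset of $[n]$ is a vertex cover, hence contains a minimal one, which (being a circuit of $\b\cc$) has size $d+1$, so every $(d+1)$-subset lies in $\b\cc$ and $\cc=\tohi$, a contradiction.
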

\begin{proof}
If $G$ is complete, then every $(n-1)$-subset of $[n]$ is a minimal vertex cover and it follows that $\cc=\tohi$.
Thus $G$ is not complete. If $G$ has exactly one maximal independent set, then it easily follows that $E(G)=\tohi$
which means $\cc$ is complete and $\sms(\cc)\neq \tohi$. Thus we can assume that $G$ has at least two maximal
independent sets.  Note that as $\cc$ is assumed to be $d$-uniform, all minimal vertex covers and hence all
maximal independent sets of $G$ have the same size.

Since facets of $\Delta(\b G)$ are the maximal independent sets of $G$ and because $\F(\Delta(\b G))$ is strongly
connected, there exist two maximal independent sets $A_1$ and $A_2$ of $G$ with $|A_1\cap A_2|=|A_1|-1= |A_2|-1$.
Set $B=A_1\cap A_2$ and suppose that $v_1\in A_1\sm B, v_2 \in A_2\sm B$. Consider $e=[n]\sm (A_1 \cup A_2)$. Then
$ev_1$ and $ev_2$ are minimal vertex covers of $G$. If $v\in \N_{\b\cc}[e]\sm e$ and $v\neq v_1, v_2$, then $ev$
is a vertex cover of $G$ not containing any of $v_1,v_2$. Therefore, $v_1v_2$ is not an edge of $G$. On the other
hand, since $A_1\cup A_2$ is not an independent set and as $(A_1\cup A_2 )\sm (A_1 \cap A_2)=\{v_1,v_2\}$, we
conclude that $v_1v_2$ is an edge. From this contradiction, it follows that $\N_{\b\cc}[e]=ev_1v_2$.

If $ev_1v_2=[n]$, then $A_1\cap A_2=\tohi$ and hence maximal independent sets of $G$ are verieces, which means $G$
is complete, against our assumption. Hence $\N_\cc[e]\neq \tohi$ and $e\in \MS(\cc)$. Let $A\se N_\cc[e]$ and
$|A|=d+1$. If $A\in \b\cc$, then $A$ is a minimal vertex cover of $G$ and as $v_1v_2\in E(G)$, one of $v_1$ and
$v_2$, say $v_1$, is in $A$. But $v_1\notin \N_\cc[e]$, a contradiction. Thus $A\in \cc$ and $e\in \sms(\cc)$.
\end{proof}

\begin{cor}
Suppose that $\cc\neq \tohi$, $I(\b\cc)=I_G$ for a graph $G$ and $I_G$ has a linear resolution (for example if
$I_G$ has linear quotients). Then $\sms(\cc)\neq \tohi$.
\end{cor}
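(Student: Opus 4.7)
The plan is to reduce to Theorem~\ref{ver cover ideal} by verifying that the hypothesis forces $\F(\Delta(\b G))$ to be strongly connected. First, I would identify $\Delta(\b G)$ with the independence complex of $G$: its faces are cliques of $\b G$, equivalently independent sets of $G$, so its facets are the maximal independent sets of $G$, whose complements form precisely the set of minimal vertex covers that generate $I_G = I(\b\cc)$. Since $\cc$ is $d$-uniform, these complements all share the same size, whence $\Delta(\b G)$ is pure of dimension $d$ and its Stanley-Reisner ideal coincides with the edge ideal $I(G)$.

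Next, because $I_G$ is the Alexander dual of $I(G) = I_{\Delta(\b G)}$, the Eagon-Reiner theorem (\cite[Theorem~8.1.9]{hibi}) converts the linear resolution hypothesis on $I_G$ into Cohen-Macaulayness of $\Delta(\b G)$. I would then invoke the classical fact that a Cohen-Macaulay simplicial complex is connected in codimension one: any two facets can be linked by a chain of facets whose consecutive members share a face of codimension one. This is exactly the strong connectedness of $\F(\Delta(\b G))$ in the sense defined in the introduction, so Theorem~\ref{ver cover ideal} applies and yields $\sms(\cc)\neq \tohi$. The parenthetical case of linear quotients is then immediate from \cite[Proposition~8.2.1]{hibi}.

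The main obstacle is the codimension-one connectedness of Cohen-Macaulay complexes, since it is the only ingredient not quoted directly from this paper. It is classical, and I would sketch a derivation from Reisner's criterion: a failure of codimension-one connectedness among the facets would single out a codimension-two face $F$ whose $\link_\Delta F$ has disconnected $0$-skeleton, contradicting the vanishing of $\tl{H}_0(\link_\Delta F;k)$ demanded by Cohen-Macaulayness. Everything else in the argument is bookkeeping on the correspondences between $\cc$, $\b\cc$, $G$, and $\Delta(\b G)$ already set up in Lemma~\ref{transition}.
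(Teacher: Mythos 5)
Your proposal is correct and follows essentially the same route as the paper: apply the Eagon--Reiner theorem to deduce that $\Delta(\b G)$ is Cohen--Macaulay, conclude that its facets are strongly connected (the paper cites \cite[Lemma 9.1.12]{hibi} for this, where you instead sketch the standard argument from Reisner's criterion), and then invoke Theorem \ref{ver cover ideal}. No gaps.
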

\begin{proof}
Since $I_G=I(G)^\vee=I_{\Delta(\b G)^\vee}$ has a linear resolution, it follows from the Eagon-Reiner theorem that
$\Delta(\b G)$ is a Cohen-Macaulay complex. Hence according to \cite[Lemma 9.1.12]{hibi}, $\F(\Delta(\b G))$ is
strongly connected. Therefore, the result follows from \ref{ver cover ideal}.
\end{proof}

Despite the above result, we cannot deduce that in the situations of this result $\cc$ is chordal, because as the
following results show, the class of vertex cover ideals is not closed under removing SMS's. A graph is said to be
\emph{unmixed}, when all of its minimal vertex covers have the same size.
\begin{prop}\label{ver cov ideal = sc deleted}
Suppose that $e\in \MS(\cc)$ and $\G(I(\b{\cc-e}))\se \G(I_G)$ for an unmixed graph on $[n]$. Then $n=d+2$ and $G$
is a complete graph. In particular, $I(\b{\cc-e})= I_G$ for some graph $G$, \ifof $\dim \cc=n-2$ and $\lg \cc \rg$
is either a simplex or the union of two simplexes sharing an MS.
\end{prop}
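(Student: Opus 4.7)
The plan is to extract from the inclusion $\G(I(\b{\cc-e}))\se \G(I_G)$ a very concrete family of minimal vertex covers of $G$, and then use unmixedness of $G$ to pin down both the vertex count and the edge set. The key elementary observation is: for every $v\in[n]\sm e$ the $(d+1)$-set $ev$ contains $e$, so $ev\notin \cc-e$ and hence $ev\in \b{\cc-e}$; thus $x_{ev}$ is a minimal generator of $I(\b{\cc-e})$, and by hypothesis also of $I_G$, so $ev$ is a minimal vertex cover of $G$. Equivalently, $U_v:=([n]\sm e)\sm\{v\}$ is a maximal independent set of $G$, for every $v\in U:=[n]\sm e$.

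Set $u=|U|=n-d$. Since each $U_v$ has size $u-1$ and $G$ is unmixed, every maximal independent set of $G$ has size $u-1$. The crux of the argument is to rule out $u\geq 3$: given such $u$, for any $a,b\in U$ one can pick $w\in U\sm\{a,b\}$, and since $\{a,b\}\se U_w$ is independent, $ab\notin E(G)$; hence $U$ itself would be independent, contradicting maximality of the strictly smaller $U_w$. The corner cases $u\in\{0,1\}$ are easy to discard: $u=0$ would force $e=[n]$ and so no $(d+1)$-subset of $[n]$ could contain $e$, contradicting $e\in\MS(\cc)$; $u=1$ would make $\tohi$ a maximal independent set of a simple graph on at least one vertex. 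Hence $u=2$ and $n=d+2$. Writing $U=\{v_1,v_2\}$, the singletons $U_{v_1}=\{v_2\}$ and $U_{v_2}=\{v_1\}$ are maximal independent sets of size $1$, so by unmixedness every maximal independent set is a singleton, which forces every two distinct vertices of $[n]$ to be adjacent; hence $G=K_n$.

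For the ``in particular'' equivalence I would apply the first half to the case of equality $I(\b{\cc-e})=I_G$, noting that $I_G$ is automatically unmixed because $I(\b{\cc-e})$ is generated in the single degree $d+1$. This gives $n=d+2$ and $G=K_n$, so $I(\b{\cc-e})=I_{K_n}$ is the ideal generated by all $(n-1)$-subsets of $[n]$. Consequently $\cc-e=\tohi$, i.e.\ every circuit of $\cc$ contains $e$; since $[n]\sm e=\{v_1,v_2\}$ has exactly two elements, $\cc\se\{ev_1,ev_2\}$, so $\lg\cc\rg$ is either a single simplex or the union of two simplexes sharing the MS $e$. The converse is immediate: in either shape $\cc-e=\tohi$, hence $I(\b{\cc-e})$ is the ideal generated by all $(d+1)$-subsets of $[n]$, which is $I_{K_n}$. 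The only delicate step in the whole argument is the cardinality bound $u\le 2$; everything else is a dictionary between the ideal-theoretic and combinatorial descriptions.
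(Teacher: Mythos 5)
Your proof is correct and follows essentially the same route as the paper: both arguments start from the observation that every set $ev$ with $v\in[n]\sm e$ lies in $\b{\cc-e}$ and is therefore a minimal vertex cover of $G$, and then exploit unmixedness to force $|[n]\sm e|=2$ and $G=K_n$; you merely phrase the cardinality bound and the completeness of $G$ in the complementary language of maximal independent sets, where the paper argues directly with edges and covers. The only point to watch is that the converse of the ``in particular'' part requires the shared MS of the two simplexes to be $e$ itself (as your forward direction in fact produces) so that $\cc-e=\tohi$; both you and the paper leave this implicit.
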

\begin{proof}
Note that $\b{\cc-e}$ contains all circuits which contain $e$, therefore each such subset of $[n]$ is a minimal
vertex cover of $G$. Choose $v_1\in [n]\sm e$. Then as $ev_1$ is a minimal vertex cover, there is an edge $v_1v_2$
of $G$ with $v_2\notin e$. Also each edge whose both ends are outside $e$, should have $v_1$ as one end and
similarly $v_2$ as the other end. Thus $[n]=ev_1v_2$, and $n=d+2$. If $G$ is not complete, say $xy\notin E(G)$ for
some $x,y\in [n]$, then $G$ has a minimal vertex cover of size $<n-1$ contained in $[n]\sm xy$. This contradicts
unmixedness of $G$, so $G$ is complete.

For the in particular case, first note that since $\cc$ is uniform, $G$ should be unmixed. Also because every
non-isolated vertex of a graph lies in a minimal vertex cover and by deleting isolated vertices of $G$, we can
assume that $G$ has the same vertex set as $\cc$. Thus by the general part, we get $\dim \cc=n-2$ and that
$\b{\cc-e}$ is a complete $d$-clutter. But all circuits of $\b{\cc-e}$ which does not contain $e$ are in $\b\cc$.
It follows that all circuits of $\cc$ contain $e$. Therefore, if $[n]\sm e=v_1v_2$, then $\cc$ is either
$\{ev_1\}$ or $\{ev_2\}$ or $\{ev_1, ev_2\}$, as required. The converse is clear.
\end{proof}
\begin{cor}
Suppose that $I(\b\cc)=I_G$ for some graph $G$ and $e\in \sms(\cc)$. Then $I(\b{\cc-e}))\neq I_{G'}$ for any graph
$G'$.
\end{cor}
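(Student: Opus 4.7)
The plan is to argue by contradiction, leveraging the ``in particular'' part of \ref{ver cov ideal = sc deleted}. Suppose, for contradiction, that $I(\b{\cc-e})=I_{G'}$ for some graph $G'$. Since $e\in \sms(\cc)\se \MS(\cc)$, that direction of \ref{ver cov ideal = sc deleted} forces $\dim \cc=n-2$ and $\lg \cc\rg$ to be either a single simplex $\{F\}$ (Case 1) or the union of two simplexes $\{F_1,F_2\}$ sharing an MS (Case 2). I will then show that in both cases $\b\cc$ cannot be the set of minimal vertex covers of any graph, contradicting the hypothesis $I(\b\cc)=I_G$.

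The main structural observation is the following: since every generator of $I(\b\cc)$ has degree $n-1$, any graph $G$ with $I_G=I(\b\cc)$ must be unmixed with all minimal vertex covers of size $n-1$; equivalently, every maximal independent set of $G$ is a singleton $\{v\}$, and $\{v\}$ is maximal independent exactly when $v$ is joined to every other vertex of $V(G)$. Combined with $V(G)\supseteq \bigcup_{F\in \b\cc}F$ (forced because each generator of $I_G$ is supported in $V(G)$), this pins down $G$ tightly from $\b\cc$.

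In Case 1, write $F=[n]\sm\{i_0\}$, so $\b\cc=\{[n]\sm\{i\}: i\neq i_0\}$. The union of these sets is $[n]$, hence $V(G)=[n]$, and each $i\neq i_0$ must then be joined to every other vertex, forcing $G=K_n$; but $K_n$ makes $\{i_0\}$ also maximal independent, so $F$ would be a minimal vertex cover of $G$, contradicting $F\in \cc$. In Case 2 with $F_1=[n]\sm\{1\}$ and $F_2=[n]\sm\{2\}$, the same reasoning for $n\geq 4$ shows every $i\geq 3$ is adjacent to all other vertices; then either $12\in E(G)$, giving $G=K_n$ and hence $F_1,F_2$ as minimal vertex covers (contradicting $F_1,F_2\in \cc$), or $12\notin E(G)$, making $\{1,2\}$ a maximal independent set and producing a minimal vertex cover of size $n-2$, which breaks unmixedness. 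The small case $n=3$ reduces to $\b\cc=\{\{1,2\}\}$, and a direct check rules out any graph $G$ with $I_G=\lg x_1x_2\rg$.

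The main obstacle I expect is the bookkeeping in Case 2, namely separating the two subcases based on whether $12\in E(G)$ and handling the small-$n$ degeneracy; the rest of the argument is essentially just repeated application of the singleton-maximal-independent observation to the explicit list of generators of $I(\b\cc)$.
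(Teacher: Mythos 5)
Your proof is correct and follows essentially the same route as the paper: invoke the ``in particular'' part of \ref{ver cov ideal = sc deleted} to pin down $\cc$, and then show that the resulting $\b\cc$ cannot be the set of minimal vertex covers of any graph, by observing that minimality of the covers $[n]\sm\{i\}\in\b\cc$ forces $G$ to be complete, which would put the circuits of $\cc$ into $\b\cc$ as well. The only difference is that the paper first uses $e\in\sms(\cc)$ to discard the two-simplex case (simpliciality of $e$ there would force $\cc$ to be complete), whereas you handle that case by the same vertex-cover argument --- which in fact yields the conclusion for any $e\in\MS(\cc)$.
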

\begin{proof}
If $I(\b{\cc-e}))= I_{G'}$ for some graph $G'$, then by \ref{ver cov ideal = sc deleted} and using the fact that
$e\in \sms(\cc)$, it follows that $\cc$ should be a $(n-2)$-dimensional simplex, say $\cc=\{ev_1\}$ where $[n]\sm
e=v_1v_2$. Thus $ev_2\in I_G$ is a minimal vertex cover of $G$. So there is an edge in $G$ with one end $v_2$ and
the other end outside of $e$. As in the proof of the above result, we should have $V(G)=[n]$, thus $v_1v_2\in
E(G)$. From this it follows that $ev_1$ is also a minimal vertex cover of $G$, that is, $ev_1\in \b\cc$, a
contradiction from which the result follows.
\end{proof}

                                     \section{Chordality and CF-Trees}

In this section, we consider the question ``which CF-trees are chordal?'' First we prove that the facets of a
vertex decomposable simplicial complex form a CF-tree which is chordal. Recall that a pure simplicial complex
$\Delta$ is called \emph{vertex decomposable}, when whether it is a simplex or there is a $v\in V(\Delta)$ such
that both $\link_\Delta v$ and $\Delta\sm v (=\Delta|_{V\sm\{v\}})$ are both pure vertex decomposable. We call the
vertex $v$ a \emph{shedding vertex}. This concept was first introduced in \cite{provan} in connection with the
Hirsch conjecture which has applications in the analysis of the simplex method in linear programming and later was
studied by other authors, for example see \cite{obs to shell, bal ver dec}. Note that 0-dimensional vertex
decomposable simplicial complexes are $\lg \{v\} \rg$ for a vertex $v$.

\begin{lem}\label{1-dim vdec = tree}
Suppose that $\cc$ is a 1-dimensional uniform clutter (that is, a graph). Then $\Delta= \lg \cc \rg$ is vertex
decomposable \ifof $\cc$ is a tree. Also if $\Delta$ is not a simplex, then $v$ is a shedding vertex \ifof it is a
free vertex (that is, a vertex with degree one).
\end{lem}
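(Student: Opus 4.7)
The plan is a joint induction on $n = |V(\cc)|$, assuming throughout that $\cc$ has no isolated vertex so that $\Delta$ is pure $1$-dimensional. The key handle is the paper's observation just above the lemma: a $0$-dimensional vertex decomposable simplicial complex is always a single-vertex simplex $\lg \{w\}\rg$. In the $1$-dimensional setting this immediately forces the link of any shedding vertex to consist of a single point, which is precisely the ``shedding $\Rightarrow$ free'' direction of the second assertion.

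In the base case $n = 2$, $\cc$ is a single edge, $\Delta$ is a simplex (hence vertex decomposable), $\cc$ is trivially a tree, and the second assertion is vacuous. For the inductive step $n \geq 3$, I would first dispatch the easy direction of the shedding characterization, which needs no tree hypothesis: if $v$ is a shedding vertex of $\Delta$, then $\link_\Delta v$ is pure, $0$-dimensional, and vertex decomposable, so by the cited observation it equals $\lg \{w\}\rg$ for some $w$; thus $v$ has a unique neighbor in $\cc$ and is free.

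Next I would handle $(\Leftarrow)$ of the main equivalence together with the converse of the shedding characterization. Assume $\cc$ is a tree with $n \geq 3$ vertices and pick any leaf $v$ with unique neighbor $w$. Connectedness together with $n \geq 3$ forces $\deg_\cc(w) \geq 2$, so $\cc - v$ is a tree on $n-1$ vertices with no new isolated vertex. Then $\link_\Delta v = \lg \{w\}\rg$ is a simplex (vertex decomposable), and $\Delta\sm v = \lg \cc - v \rg$ is pure and vertex decomposable by the inductive hypothesis. Hence $v$ is a shedding vertex and $\Delta$ is vertex decomposable.

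Finally, for $(\Rightarrow)$ of the main equivalence, assume $\Delta$ is vertex decomposable. Since $n \geq 3$, $\Delta$ is not a simplex, so it has a shedding vertex $v$; by the first step of the inductive argument, $v$ is a leaf of $\cc$, say with neighbor $w$. The complex $\Delta\sm v$ is pure and vertex decomposable on $n-1$ vertices (purity forces $\Delta\sm v = \lg \cc - v\rg$), so the inductive hypothesis gives that $\cc - v$ is a tree; re-attaching the pendant edge $vw$ preserves both connectedness and acyclicity, so $\cc$ is a tree. The main obstacle is the interweaving of the two assertions: the direction ``free $\Rightarrow$ shedding'' genuinely relies on a tree hypothesis to keep $\Delta\sm v$ pure (the disconnected clutter $\{12,34\}$ has a leaf whose removal strands its neighbor), which is why the two equivalences must be proved simultaneously in a single induction rather than in sequence.
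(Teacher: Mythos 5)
Your proof is correct and follows essentially the same route as the paper's: an induction on the number of vertices in which a shedding vertex is identified with a leaf via the observation that a pure $0$-dimensional vertex decomposable link must be a single point, and connectedness of the tree is used to show the neighbor of a leaf keeps positive degree so that $\Delta\sm v$ stays pure. Your extra care about why the two assertions must be handled in one induction (and the $\{12,34\}$-type purity pitfall) only makes explicit what the paper's argument uses implicitly.
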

\begin{proof}
(\rgive): If $|\cc|=1$, then $\cc$ is a simplex and vertex decomposable. Thus assume that $|\cc|>1$. Let $v$ be a
free vertex of $\cc$ and $l$ the vertex adjacent to $v$. Then $\link_\Delta v=\lg \{l\} \rg$ is vertex
decomposable and because $\cc$ is connected, $l$ is adjacent to a vertex $\neq v$. So $\{l\}\notin \F(\Delta\sm
v)$, that is, $\Delta\sm v$ is pure and $\F(\Delta \sm v)$ is a tree with fewer vertices. Hence the result follows
by induction.

(\give): Let $v$ be a shedding vertex of $\Delta$. Since $\link_\Delta v$ is 0-dimensional vertex decomposable, we
see that $\link_\Delta v=\lg \{u\} \rg$ for a vertex $u$ and hence $\deg(v)=1$. Again since $\Delta\sm v$ is
vertex decomposable with fewer vertices, the claim follows by induction.
\end{proof}

Suppose that $\Delta=\lg \cc \rg$ and $v\in [n]$. It could be possible that $\lg \cc-v \rg \neq \Delta\sm v$, for
example if $\cc=\{12,34\}$, then $\cc-1=\{34\}$ but $\F(\Delta \sm 1)=\{34, 2\}$. But we have
\begin{lem}\label{v-deletion}
Assume that $\Delta=\lg \cc \rg$ is vertex decomposable and $v$ is a shedding vertex. Then if $\cc-v\neq \tohi$,
we have $\lg \cc-v \rg = \Delta\sm v$.
\end{lem}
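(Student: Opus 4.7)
The plan is to show the two inclusions $\cc-v\se \F(\Delta\sm v)$ and $\F(\Delta\sm v)\se \cc-v$ separately, since the identity $\lg\cc-v\rg=\Delta\sm v$ is equivalent to $\cc-v=\F(\Delta\sm v)$.

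For the easy direction, take $F\in \cc-v$. Then $F$ is a facet of $\Delta$ with $v\notin F$, hence $F\in \Delta\sm v$. Any face $G$ of $\Delta\sm v$ containing $F$ is in particular a face of $\Delta$, so by maximality of $F$ in $\Delta$ we get $G=F$. Thus $F\in \F(\Delta\sm v)$.

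For the reverse inclusion, the key observation is that the shedding hypothesis forces $\Delta\sm v$ to be pure, and the extra hypothesis $\cc-v\neq \tohi$ then pins down its dimension: some circuit of $\cc$ avoids $v$ and is therefore a $(\dim\Delta)$-dimensional facet of $\Delta\sm v$, so purity upgrades to $\dim(\Delta\sm v)=\dim \Delta$. Now let $F\in \F(\Delta\sm v)$, so $|F|=\dim\Delta+1$. Since $F\in \Delta$, there exists $G\in \cc$ with $F\se G$; comparing cardinalities gives $F=G\in \cc$, and as $v\notin F$ we conclude $F\in \cc-v$.

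The main point to be careful about is precisely the dimension matching in the second inclusion: without $\cc-v\neq \tohi$ the complex $\Delta\sm v$ might be pure of strictly smaller dimension than $\Delta$ (in which case facets of $\Delta\sm v$ need not be circuits of $\cc$), so the hypothesis cannot be dropped. Everything else is a direct unwinding of the definitions of $\cc-v$, $\Delta\sm v$, and $\lg\cdot\rg$.
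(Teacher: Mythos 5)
Your proof is correct and follows essentially the same route as the paper: the inclusion $\cc-v\se\F(\Delta\sm v)$ is immediate, and the reverse inclusion uses exactly the paper's key point that purity of $\Delta\sm v$ (from the shedding hypothesis) together with $\cc-v\neq\tohi$ forces every facet of $\Delta\sm v$ to have dimension $\dim\Delta$, hence to be a circuit of $\cc$ avoiding $v$. Your closing remark about why the hypothesis $\cc-v\neq\tohi$ cannot be dropped matches the example $\cc=\{12,34\}$ given in the paper just before the lemma.
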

\begin{proof}
It is clear that $\cc-v \se \F(\Delta \sm v)$. Let $F\in \F(\Delta \sm v)$. Since $\tohi\neq \cc-v\se \F(\Delta\sm
v)$, and because $\Delta\sm v$ is pure, we conclude that $\dim F=d$. So $v\notin F\in \F(\Delta)=\cc$ and the
claim follows.
\end{proof}

Here we call $e\in \MS(\cc)$ a \emph{free}, when $\deg(e)=1$. In this case $\N[e]$ is exactly a circuit of $\cc$
and hence a clique. Therefore every free MS is a SMS.
\begin{thm}\label{vdec has leaf}
Suppose that $\Delta= \lg \cc \rg$ is vertex decomposable and not a simplex and $v$ is a shedding vertex. Then
there exists a free MS $e$ of $\cc$ containing $v$ such that either $\cc-e=\cc -v$ or $\lg \cc-e \rg$ is vertex
decomposable and $v$ is a shedding vertex of $\lg \cc-e \rg$.
\end{thm}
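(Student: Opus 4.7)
The plan is to proceed by induction on $d = \dim \cc$. For the base case $d = 1$, \ref{1-dim vdec = tree} gives that $\cc$ is a tree and $v$ is a leaf, contained in a unique edge $\{v,l\}$; then $e = \{v\}$ is a free maximal subcircuit with $\cc - e = \cc - v$. For the inductive step I would split on whether $\link_\Delta v$ is a simplex. When it is, $v$ lies in a unique circuit $F$ of $\cc$, so any $d$-subset $e \se F$ containing $v$ is free in $\cc$ and satisfies $\cc - e = \cc \sm \{F\} = \cc - v$.

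The substantive case is when $\link_\Delta v$ is not a simplex. Set $\cc' = \F(\link_\Delta v)$, a $(d-1)$-uniform clutter whose complex is pure vertex decomposable and non-simplex, so it admits some shedding vertex $u$. Apply the inductive hypothesis to $(\cc', u)$ to obtain a free $e' \in \MS(\cc')$ with $u \in e'$ satisfying one of the two conclusions of the theorem for $\cc'$. Set $e := e' \cup \{v\}$; if $F'$ is the unique circuit of $\cc'$ containing $e'$, then $F := F' \cup \{v\}$ is the unique circuit of $\cc$ containing $e$, making $e$ a free MS of $\cc$ with $v \in e$. Because $\link_\Delta v$ has multiple facets, $v$ lies in more than one circuit, so $\cc - e \neq \cc - v$ and the second alternative must be verified.

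The verification hinges on two identifications. First, $(\lg \cc - e \rg) \sm v = \Delta \sm v$: the containment $\se$ is obvious, and for the reverse, any face $A \in \Delta \sm v$ is contained in some circuit of $\cc$; if that circuit is $F$ itself, then $A \se F \sm \{v\}$, which as a $(d-1)$-face of the pure $d$-dimensional complex $\Delta \sm v$ (pure by the shedding of $v$) sits inside some $d$-dimensional facet $G$ of $\Delta \sm v$, and this $G$ is itself a circuit of $\cc$ not containing $v$, hence in $\cc - e$. Second, a direct face analysis shows $\link_{\lg \cc - e \rg} v = \lg \cc' - e' \rg$: circuits of $\cc - e$ containing $v$ correspond bijectively to circuits of $\cc' - e'$ via removing $v$, using $\cc - e = \cc \sm \{F\}$ and $\cc' - e' = \cc' \sm \{F'\}$.

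Finally, I would invoke the inductive alternative for $(\cc', u)$ to see that $\lg \cc' - e' \rg$ is pure vertex decomposable. If the IH already gives $u$ as a shedding vertex of $\lg \cc' - e' \rg$, this is immediate. Otherwise $\cc' - e' = \cc' - u$, which is non-empty since $\link_\Delta v$ has at least two circuits, so \ref{v-deletion} identifies $\lg \cc' - u \rg$ with $(\link_\Delta v) \sm u$, pure vertex decomposable because $u$ sheds $\link_\Delta v$. Combined with the purity of $\Delta \sm v$ and the evident purity of $\lg \cc - e \rg$, this makes $\lg \cc - e \rg$ vertex decomposable with $v$ as a shedding vertex. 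The main obstacle I anticipate is the identification $(\lg \cc - e \rg) \sm v = \Delta \sm v$: one must ensure that removing the single circuit $F$ strips no previously-existing $d$-face not containing $v$, and the purity of $\Delta \sm v$ forced by the shedding of $v$ is the crucial lever that replaces $F \sm \{v\}$ by another $d$-face.
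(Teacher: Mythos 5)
Your argument follows essentially the same route as the paper's own proof: induct on $d$, pass to $\link_\Delta v$, lift a free MS $e'$ of the link to $e=e'\cup\{v\}$, and verify that $v$ sheds $\Gamma=\lg \cc-e\rg$ by computing $\link_\Gamma v$ and $\Gamma\sm v$. There is, however, one genuine gap, and it sits exactly at the point you yourself flag as the crucial lever. You assert that $\Delta\sm v$ is pure of dimension $d$ ``by the shedding of $v$,'' and you use this to replace $F\sm\{v\}$ by a $d$-dimensional facet of $\Delta\sm v$ when proving $(\lg\cc-e\rg)\sm v=\Delta\sm v$. But the definition of shedding vertex used in this paper only requires $\link_\Delta v$ and $\Delta\sm v$ to be pure vertex decomposable; it does not force $\Delta\sm v$ to keep dimension $d$. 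If every circuit of $\cc$ contains $v$ (that is, $\cc-v=\tohi$ and $\Delta$ is a cone with apex $v$ that is not a simplex, e.g.\ $\cc=\{12v,13v\}$ with $d=2$), then $\Delta\sm v$ has dimension $d-1$ and your identification is false: taking $e=2v$ gives $(\lg\cc-e\rg)\sm v=\lg \{1,3\}\rg$ while $\Delta\sm v=\lg \{1,2\},\{1,3\}\rg$. The conclusion of the theorem still holds there, but it requires the separate observation that the paper makes: when every facet of $\Gamma$ contains $v$, one has $\Gamma\sm v=\link_\Gamma v$, which is vertex decomposable by the link computation you already carried out. Your dichotomy ``$\cc-e=\cc-v$ or else verify the second alternative'' is fine; it is the verification of the second alternative that silently excludes the cone case.

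Apart from this, the proof is correct and matches the paper's. Where you prove $(\lg\cc-e\rg)\sm v=\Delta\sm v$ by an inline purity argument, the paper gets the same equality (in the case $\cc-v\neq\tohi$) by sandwiching, via Lemma \ref{v-deletion}: $\Delta\sm v=\lg\cc-v\rg=\lg\cc-e-v\rg\se\Gamma\sm v\se\Delta\sm v$; the two arguments are interchangeable. Your handling of the link ($\link_\Gamma v=\lg\cc'-e'\rg$, vertex decomposable either directly from the inductive alternative or via \ref{v-deletion} applied to $(\cc',u)$) is exactly the paper's. So the only repair needed is to add the case distinction on whether $\cc-v=\tohi$ before invoking purity of $\Delta\sm v$ in dimension $d$.
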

\begin{proof}
We prove the result by induction on $d$. The case $d=1$ follows from \ref{1-dim vdec = tree}. Assume that $d>1$.
Set $\Delta'= \link_\Delta v$ and $\cc'=\F(\Delta')$. If $\Delta'$  is a simplex let $e'$ be any SMS of $\cc'$.
Else let $v'$ be a shedding vertex of $\Delta'$ and $e'$ be the SMS containing $v'$ provided by the induction
hypothesis. Applying \ref{v-deletion} on $\cc'$ and $v'$ and using the induction hypothesis, we see that in both
cases, $e'$ is a free MS of $\cc'$ and either $\lg \cc'-e' \rg$ is vertex decomposable or $\cc'-e'=\tohi$. Set
$e=e'v$. Then $\deg_\cc[e]=\deg_{\cc'}[e']=1$. If $\cc'-e'=\tohi$, then $\cc-e=\cc-v$. Assume $\cc'-e'\neq \tohi$
and let $\Gamma'=\lg \cc'-e' \rg$ and $\Gamma=\lg \cc-e \rg$. Then $\link_\Gamma v=\Gamma'$ which is vertex
decomposable. If $\cc-v\neq \tohi$, then by \ref{v-deletion} we get
$$ \Delta\sm v=\lg \cc-v \rg= \lg \cc-e-v \rg \se \Gamma\sm v$$
and as $\Gamma\se \Delta$, it follows that $\Gamma\sm v=\Delta\sm v$. Hence $\Gamma\sm v$ is vertex decomposable.
If $\cc-v=\tohi$, then every facet of $\Gamma$ contains $v$ and hence $\Gamma\sm v= \link_\Gamma v$ is again
vertex decomposable. Hence $v$ is a shedding vertex of $\Gamma$ and $\Gamma$ is vertex decomposable, as required.
\end{proof}

\begin{cor}
Suppose that $\Delta= \lg \cc \rg$ is vertex decomposable. Then $\cc$ is both a CF-tree and chordal. In
particular, if $\Delta$ is a pure $d$-dimensional vertex decomposable simplicial complex and $\Delta'$ is obtained
from $\Delta$ by adding all faces of dimension $< d$, then $I_{\Delta'}$ has a linear resolution.
\end{cor}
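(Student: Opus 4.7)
The plan is induction on $|\cc|$. In the base case $\cc=\{F\}$ is a single circuit (so $\Delta$ is a simplex); then every MS has closed neighbourhood equal to the clique $F$, hence is simplicial, and no nonempty subclutter can be a CF-cycle since every MS has degree~$1$.

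In the inductive step, with $\Delta$ not a simplex, I fix a shedding vertex $v$ and apply \ref{vdec has leaf} to obtain a free MS $e$ of $\cc$ containing $v$; let $F$ denote the unique circuit through $e$. Combining \ref{vdec has leaf} with \ref{v-deletion} (which in the branch $\cc-e=\cc-v$ identifies $\lg\cc-e\rg$ with $\Delta\sm v$), either $\cc-e$ is empty or $\lg\cc-e\rg$ is again vertex decomposable, now with strictly fewer circuits. The inductive hypothesis then supplies that $\cc-e$ is both a CF-tree and chordal (or returns us to the base case). Since $e$ is free, $\N[e]=F$ is a clique and $e\in\sms(\cc)$; prepending $e$ to a simplicial order of $\cc-e$ yields one for $\cc$. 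For the CF-tree property, any CF-cycle $\cc'\se\cc$ either avoids $F$, whence $\cc'\se\cc-e$ contradicts the inductive CF-tree property, or contains $F$, whence $1\le\deg_{\cc'}(e)\le\deg_\cc(e)=1$ makes $e$ an MS of $\cc'$ of odd degree, again a contradiction.

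For the ``in particular'' part, the strategy is to identify $I_{\Delta'}$ with $I(\b\cc)$, after which chordality of $\cc$ together with \cite[Remark~3.10]{all} finishes the proof. Every non-face of $\Delta'$ has size at least $d+1$, and the size-$(d+1)$ minimal non-faces are exactly the elements of $\b\cc$. Subsets of size $\ge d+3$ cannot be minimal since every such set contains a size-$(d+2)$ subset that already fails to lie in $\Delta'$. A hypothetical size-$(d+2)$ minimal non-face $F$ would require every $(d+1)$-subset of $F$ to lie in $\cc$; but the collection of these $d+2$ subsets is strongly connected (any two share a size-$d$ subset of $F$) and each size-$d$ subset of $F$ lies in exactly two of them, so it is a CF-cycle inside $\cc$, contradicting the first part.

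The delicate point is justifying the inductive reduction: in the branch of \ref{vdec has leaf} where $\cc-e=\cc-v$, one must confirm $\cc-v\ne\tohi$ (forced by purity of $\Delta\sm v$ whenever $\Delta$ is not a simplex) so that \ref{v-deletion} returns $\lg\cc-e\rg=\Delta\sm v$, which inherits vertex decomposability from the shedding property of $v$.
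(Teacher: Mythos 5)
Your proof is correct and follows essentially the same route as the paper: both arguments are driven by repeated application of Theorem \ref{vdec has leaf} to peel off free MS's (yielding simultaneously the chordality sequence and the non-existence of CF-cycles, since a CF-cycle cannot use an MS of degree one), and both reduce the ``in particular'' claim to the identity $I_{\Delta'}=I(\b\cc)$, which holds because a CF-tree contains no clique on $d+2$ vertices. The only cosmetic differences are that you induct on $|\cc|$ where the paper inducts on the number of vertices, and the justification of your ``delicate point'' should be that a free MS lies in exactly one circuit, so in the branch $\cc-e=\cc-v$ one has $\cc-v\neq\tohi$ as soon as $|\cc|\geq 2$ (rather than via purity of $\Delta\sm v$) --- though your argument handles the empty case anyway.
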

\begin{proof}
Suppose that $\Delta$ has a shedding vertex $v$ and $e_1$ is the free MS of $\cc$ containing $v$ obtained by
\ref{vdec has leaf}. Then either $\cc-e_1=\cc-v$ or $v$ is still a shedding vertex in $\cc-e_1$. In the latter
case by applying \ref{vdec has leaf} again, we can find a free MS $e_2$ of $\cc_2=\cc-e_1$ such that either
$\cc-e_2=\cc-v$ or $v$ is a shedding vertex of $\cc-e_2$. Therefore, by repeatedly applying \ref{vdec has leaf},
we can find a sequence of $e_1, \ldots, e_t$ of subsets of $[n]$ containing $v$ such that $e_i$ is a free MS of
$\cc_{i}=\cc-e_1-\cdots -e_{i-1}$ for each $i$, and $\cc_{t+1}= \cc-v$. If $\cc_{t+1}$ is a simplex or
$\cc_{t+1}=\tohi$, then $\cc_{t+1}$ and hence $\cc$ are chordal. Thus we can assume that $\cc_{t+1}= \cc-v \neq
\tohi$. Then by \ref{v-deletion}, we have $\lg \cc-v \rg=\Delta\sm v$ is vertex decomposable with fewer vertices.
Consequently, by induction it follows that $\cc$ is chordal.

To show that $\cc$ is a CF-tree, note that as all $e_i$'s found above are free MS'es, we have indeed proved that
there is a sequence $e_1, \ldots, e_k$ such that $e_i$ is a free MS of $\cc-e_1\cdots-e_{i-1}$ and $\cc-
e_1\cdots- e_k= \tohi$. Now assume that $\cc'\se \cc$ is a CF-cycle. As every MS of $\cc'$ has even degree, we see
that $e_1\notin \MS(\cc')$. So $\cc'\se \cc-e_1$. Whence by a similar argument $e_2\notin \MS(\cc')$ and $\cc'\se
\cc-e_1-e_2$. Continuing this way, we see that in fact $\cc'=\tohi$, which means, $\cc$ has no CF-cycles.

For the in particular case, just note that if $\cc=\F(\Delta)$, then $I(\b\cc)= I_{\Delta(\cc)}$ and since $\cc$
is a CF-tree, it follows that $\Delta(\cc)= \Delta'$.
\end{proof}


An example of a CF-tree which is not chordal is presented in \cite[p. 17]{Connon}, which is a triangulation of the
mod 3 Moore space (see \oldref{fig2}). But constructing such examples is hard. A question that may arise is that
for which $n$ and $d$ we can find a non-chordal $d$-dimensional CF-tree on $n$ vertices. Considering this question
we have:

\begin{figure}[!t]
\begin{center}
\includegraphics[scale= 0.8]{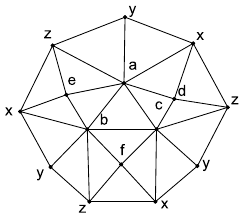}
\end{center}
\caption{A triangulation of the mod 3 Moore space; the circuits are the small triangles} \label{fig2}
\end{figure}

\begin{lem}\label{sms is leaf}
Let $\cc$ be a CF-tree. Then $e\in\sms(\cc)$ \ifof\ $e$ is a free.
\end{lem}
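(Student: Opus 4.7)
The plan is to handle the two implications separately, with the easy direction first and the CF-tree structure entering only in the forward direction.

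For the implication $(\Leftarrow)$, observe that if $e$ is free then $\deg(e)=1$, so there is a unique $v\in[n]$ with $ev\in\cc$, and hence $\N[e]=ev$. The only $(d+1)$-subset of $\N[e]$ is $ev$ itself, which lies in $\cc$, so $\N[e]$ is trivially a clique. Thus $e\in\sms(\cc)$. This argument does not use the CF-tree hypothesis at all.

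For the implication $(\Rightarrow)$, I plan to argue by contradiction: suppose $e\in\sms(\cc)$ but $\deg(e)\geq 2$. Pick two distinct vertices $v_1,v_2\in \N[e]\sm e$ and set $S=e\cup\{v_1,v_2\}$, so $|S|=d+2$ and $S\se\N[e]$. Since $e$ is simplicial, $\N[e]$ is a clique, and therefore so is $S$; in particular every $(d+1)$-subset of $S$ belongs to $\cc$. Let $\cc'$ be the subclutter of $\cc$ consisting of all $(d+1)$-subsets of $S$, i.e.\ the complete $d$-clutter on $d+2$ vertices. The goal is to show $\cc'$ is a CF-cycle, contradicting the hypothesis that $\cc$ is a CF-tree.

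The verification that $\cc'$ is a CF-cycle is the main (but short) technical step. First, $\cc'$ is nonempty. Second, any MS $f\in\MS(\cc')$ has $|f|=d$ and $f\se S$, so the vertices $s\in S\sm f$ that extend $f$ to a $(d+1)$-subset of $S$ form a set of size $|S|-d=2$; hence $\deg_{\cc'}(f)=2$, which is even. Third, $\cc'$ is strongly connected: any two $(d+1)$-subsets $F_1,F_2$ of the $(d+2)$-set $S$ satisfy $|F_1\cap F_2|\geq (d+1)+(d+1)-(d+2)=d$, so they share an MS directly. Thus $\cc'$ is a CF-cycle contained in $\cc$, which is the desired contradiction. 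The main obstacle is really only being careful that $\N[e]\sm e$ contains at least two vertices when $\deg(e)\geq 2$ (which is immediate since distinct circuits $ev_1, ev_2$ through $e$ force $v_1\neq v_2$), and that the case $\deg(e)=0$ cannot occur because $e\in\MS(\cc)$ already forces $\deg(e)\geq 1$.
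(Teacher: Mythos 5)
Your proof is correct and follows essentially the same route as the paper: the backward direction is immediate, and the forward direction produces a clique on $d+2$ vertices inside $\N[e]$ and observes that the complete $d$-clutter on those vertices is a CF-cycle. The only difference is that you spell out the verification (even degrees, strong connectedness) that the paper dismisses as ``readily checked,'' which is a welcome addition but not a different argument.
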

\begin{proof}
($\Leftarrow$): Obvious. (\give): If $\deg(e)>1$, then $|\N[e]|\geq d+2$. As $\N[e]$ is a clique, we conclude
that $\cc$ has a clique of size $d+2$. But one can readily check that every clique of size $d+2$ is a
CF-cycle, contradicting the assumption. So $\deg(e)=1$, as claimed.
\end{proof}

\begin{cor}\label{trees have leaf}
The following statements are equivalent.
\begin{enumerate}
\item Every $d$-dimensional CF-tree on $[n]$ is chordal.

\item Every $d$-dimensional CF-tree on $[n]$ has a free MS.

\item \label{3} Every $d$-dimensional clutter $\cc$ on $[n]$ with $\deg(e)>1$ for all $e\in\MS(\cc)$, has a
    CF-cycle.
\end{enumerate}
\end{cor}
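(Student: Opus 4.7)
The plan is to split the three-way equivalence into two independent bridges: $(i)\iff(ii)$ via Lemma~\ref{sms is leaf} (SMS = free MS in a CF-tree), and $(ii)\iff(iii)$ as a direct contrapositive. Throughout I would implicitly take $\cc$ in (iii) to be nonempty, since otherwise (iii) would fail vacuously on the empty clutter.

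For $(i)\Rightarrow(ii)$: if $\cc$ is a $d$-dimensional CF-tree on $[n]$, then $\cc\neq\tohi$ by the definition of a CF-tree, so chordality of $\cc$ delivers some $e\in\sms(\cc)$, and Lemma~\ref{sms is leaf} forces $e$ to be a free MS. For the converse $(ii)\Rightarrow(i)$ I would induct on $|\cc|$. Given a CF-tree $\cc$, statement (ii) supplies a free MS $e$; since $\N[e]$ equals the unique circuit through $e$, it is a clique, so $e\in\sms(\cc)$. The deletion $\cc-e$ has strictly fewer circuits and inherits the absence of CF-cycles from $\cc$, so it is either empty or again a CF-tree on $[n]$; by the inductive hypothesis it is chordal, and prepending the removal of $e$ exhibits a chordal decomposition of $\cc$.

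The equivalence $(ii)\iff(iii)$ is purely a matter of contrapositives: the hypothesis ``every $e\in\MS(\cc)$ has $\deg(e)>1$'' negates ``$\cc$ has a free MS,'' and the conclusion ``$\cc$ contains a CF-cycle'' negates ``$\cc$ is a CF-tree'' (for nonempty $\cc$). Thus (ii) and (iii) assert the same implication read in opposite directions. I do not foresee any serious obstacle; the only point deserving care is in the inductive step of $(ii)\Rightarrow(i)$, where one must verify that whenever $\cc-e$ is nonempty it is still a CF-tree on $[n]$. This is immediate, since any CF-cycle of $\cc-e$ would also be a CF-cycle of $\cc$.
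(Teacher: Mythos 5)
Your proposal is correct and follows essentially the same route as the paper: (i)$\Leftrightarrow$(ii) via Lemma~\ref{sms is leaf} together with the fact that a free MS is simplicial and that subclutters of CF-trees are CF-trees, and (ii)$\Leftrightarrow$(iii) by taking contrapositives. The paper phrases (ii)$\Rightarrow$(i) as an iterated removal of free MS's rather than a formal induction on $|\cc|$, but the content is identical, and your remark about reading the clutter in (iii) as nonempty matches the intended interpretation.
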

\begin{proof}
\ref{2} $\Leftrightarrow$ \ref{3}: Obvious. \ref{1} \give\ \ref{2}: Clear by the previous lemma. \ref{2} \give\
\ref{1}: Note that every subclutter of every CF-tree is again a CF-tree. Thus if \ref{2} holds and $e_1$ is a free
MS of a CF-tree $\cc$, then $\cc-e_1$ has a free MS $e_2$, $\cc-e_1-e_2$ has a free MS $e_3$, \ldots. These
$e_i$'s satisfy the definition of chordality for $\cc$.
\end{proof}

Next we utilize Alexander duality to get conditions equivalent to the ones in the above result. For this we need a
concept which we call a CF-chorded clutter. Using the notion of CF-cylces, Connon and Faridi defined
\textit{chorded simplicial complexes} (see \cite[Definitions 8.2 and 4.2]{CF1}). We say that $\cc$ is
\emph{CF-chorded}, when the clique complex $\Delta(\cc)$ is a chorded simplicial complex in the sense of
\cite[Definition 8.2]{CF1} of Connon and Faridi. Here we do not state the exact definition. What is important for
us is that $I(\b{\cc})$ has a linear resolution over any field of characteristic 2 \ifof $\cc$ is CF-chorded (see
(a) $\iff$ (b) $\iff$ (d) of \cite[Theorem 18]{CF2}). In particular, CF-trees are CF-chorded and hence have linear
resolution over fields of characteristic 2 (see \cite[Theorem 7.3]{CF1}).

Also it follows immediately from the definition of a CF-chorded clutter such as $\cc$ that for any CF-cycle $C\se
\cc$ which is not a clique, there is another CF-cycle $C'\se \cc$ whose vertex set is a strict subset of the
vertex set of $C$ (see \cite[Defenitions 8.2, 4.2 and 4.1]{CF1}). In particular, $\cc$ has a clique on a
($d+2$)-subset of the vertex set of any CF-cycle in $\cc$.

We call $\{e\in \MS(\cc)|\deg(e)$ is odd$\}$ the \emph{boundary of $\cc$} and denote it by $\rnd(\cc)$. Also we
say that a simplicial complex $\Delta$ is \emph{almost $d$-complete} when $\Delta$ contains all $(d+1)$-subsets of
$[n]$ except exactly one of them.
\begin{thm}\label{dual}
Assume $d\leq n-2$ and let $d'=n-d-2$. The following are equivalent.
\begin{enumerate}
\item Every $d$-dimensional CF-tree on $[n]$ is chordal.

\item For every  non-$d'$-complete $d'$-dimensional $\z_2$-Cohen-Macaulay simplicial complex $\Delta$ on $[n]$
    which is $(d'-1)$-complete, there is a $(d'+2)$-set $L\se [n]$ such that $\Delta|_L$ is almost $d$-complete.

\item For every non-$d'$-complete $d'$-dimensional simplicial complex $\Delta$ on $[n]$ with the following
    properties, there is a $(d'+2)$-set $L\se [n]$ such that $\Delta|_L$ is almost $d$-complete.
\begin{enumerate}
\item \label{-1} $\Delta$ is $(d'-1)$-complete,

\item \label{-2} for every $A\se [n]$ with $|A|\geq n-d'+1$, every $k$-cycle with vertex set $A$ is the boundary
of a family of facets of $\link_\Delta([n]\sm A)$ where $k=|A|-(n-d'+1)$.
\end{enumerate}
\end{enumerate}
\end{thm}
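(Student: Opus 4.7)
The plan is to transport the problem across Alexander duality. For a $d$-clutter $\cc$ on $[n]$ set $\Delta_\cc=\lg\cc^\vee\rg=\Delta(\cc)^\vee$, which by \ref{transition} is a pure $d'$-dimensional simplicial complex on $[n]$. A sequence of duality translations yields a dictionary: $\cc$ is CF-chorded iff $I(\b\cc)$ has a linear resolution over $\z_2$ iff $\Delta_\cc$ is $\z_2$-Cohen-Macaulay, by Eagon-Reiner together with the Connon-Faridi criterion recalled in the introduction; $\cc$ has no $(d+2)$-clique iff $\Delta_\cc$ is $(d'-1)$-complete, by an Alexander-duality computation using $S\in\Delta(\cc)\iff[n]\sm S\notin\Delta_\cc$; and $\cc\neq\tohi$ iff $\Delta_\cc$ is not $d'$-complete. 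Since every $(d+2)$-clique is itself a CF-cycle and a CF-chorded clutter without $(d+2)$-cliques has no CF-cycle at all, this gives a bijection between non-empty $d$-CF-trees on $[n]$ and non-$d'$-complete, $(d'-1)$-complete, $d'$-dimensional, $\z_2$-Cohen-Macaulay simplicial complexes on $[n]$.

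Next I would translate ``$\cc$ has a free MS'' through this dictionary. For a $d$-subset $e\se[n]$ and $L=[n]\sm e$ (so $|L|=d'+2$), the equivalence $ev\in\cc\iff L\sm v\notin\Delta_\cc$ shows that $e$ is a free MS of $\cc$ exactly when precisely one of the $d'+2$ many $(d'+1)$-subsets of $L$ fails to lie in $\Delta_\cc$, i.e., $\Delta_\cc|_L$ is missing exactly one of its top-dimensional faces, which is the meaning of ``almost $d$-complete'' in the present setting. By \ref{trees have leaf}, statement (1) is equivalent to ``every $d$-CF-tree on $[n]$ has a free MS'', and combining this with the dictionary above yields (1) $\iff$ (2).

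For (2) $\iff$ (3), I would invoke Reisner's criterion: $\Delta$ is $\z_2$-Cohen-Macaulay iff $\tl H_i(\link_\Delta B;\z_2)=0$ for every $B\in\Delta$ and every $i<\dim\link_\Delta B$. Under hypothesis (a), for each $B\in\Delta$ of size $j$ the link $\link_\Delta B$ is $(d'-j-1)$-complete, and an elementary homology computation (cycles in the $m$-skeleton of a simplex vanish below dimension $m$) forces $\tl H_i(\link_\Delta B;\z_2)=0$ automatically for $i<d'-j-1$. The only nontrivial Reisner condition left is $\tl H_{d'-j-1}(\link_\Delta B;\z_2)=0$ for $j\leq d'-1$, which, on setting $A=[n]\sm B$ and $k=d'-1-j=|A|-(n-d'+1)$, is precisely what (b) encodes in chain-complex language.

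The main obstacle lies in this last translation: the paper defines a $k$-cycle as a strongly connected CF-cycle with a specified vertex set, whereas an arbitrary $\z_2$-cycle in $\tl C_k(\link_\Delta B;\z_2)$ may neither be strongly connected nor use all of $A$. To close the gap I would first decompose any $\z_2$-cycle into its strongly connected components, each of which is itself a $\z_2$-cycle and hence a CF-cycle, and then handle cycles with strictly smaller vertex set $A_0\subsetneq A$ by applying (b) inside the smaller link $\link_\Delta([n]\sm A_0)$, using the $(d'-1)$-completeness of $\Delta$ both to guarantee that each $A_0$ still satisfies $|A_0|\geq n-d'+1$ in the relevant range and to lift the boundary facets produced there back to facets of $\link_\Delta B$.
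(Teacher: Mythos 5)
Your proposal follows the paper's own route in both halves: the Alexander-duality dictionary (CF-chorded $\iff$ linear resolution over $\z_2$ $\iff$ $\Delta_\cc$ is $\z_2$-Cohen--Macaulay; no $(d+2)$-clique $\iff$ $(d'-1)$-complete; free MS $\iff$ almost complete restriction to $L=[n]\sm e$) is exactly how the paper proves \ref{1}~$\iff$~\ref{2}, and the reduction of \ref{2}~$\iff$~\ref{3} to Reisner's criterion plus Connon's Proposition 5.1 (kernel elements of $\rnd_i$ over $\z_2$ are disjoint unions of CF-cycles) is also the paper's argument, including the observation that $(d'-1)$-completeness kills all homology below the top link dimension. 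You have also correctly isolated the delicate point in the last step, which the paper passes over in one sentence.

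However, your proposed repair of that point does not work. The strong-component decomposition is fine (a sum of boundaries is a boundary), but the case of a $(d_F-1)$-dimensional CF-cycle $C$ in $\link_\Delta F$ with vertex set $A_0\subsetneq A=[n]\sm F$ cannot be handled by ``applying (b) inside $\link_\Delta([n]\sm A_0)$'': condition \ref{-2} for $A_0$ only speaks about $k_0$-cycles with $k_0=|A_0|-(n-d'+1)$, which is strictly less than $\dim C = |A|-(n-d'+1)$, so it says nothing about $C$; moreover the facets of $\link_\Delta([n]\sm A_0)$ have dimension $k_0+1$, too small to bound $C$. Even the preliminary claim $|A_0|\geq n-d'+1$ fails in general, since a $(d_F-1)$-dimensional CF-cycle is only guaranteed $d_F+1$ vertices. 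A concrete instance: $n=4$, $d=d'=1$, $\Delta$ the $0$-complete complex with facets $\{12,34\}$; the $0$-cycle $\{1\}+\{3\}$ lies in $\ker\rnd_0$ and is not a boundary, yet it is invisible to \ref{-2} for every admissible $A_0$. So this step of your argument fails as written; closing it requires either reading ``vertex set $A$'' in \ref{-2} as ``vertex set contained in $A$'' (which is how the paper's identification of $(d_F-1)$-cycles of $\link F$ with CF-cycles on $A$ must be understood for its own proof to be complete) or a genuinely new argument showing such smaller-support cycles are automatically boundaries, which the example above shows is false without further hypotheses.
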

\begin{proof}
\ref{1} \give\ \ref{2}: Suppose that $\Delta$ is a non-complete $d'$-dimensional $\z_2$-Cohen-Macaulay simplicial
complex on $[n]$ which contains all possible faces with dimension = $d'-1$. Set $\cc'=\F(\Delta)$ and
$\cc=\cc'^\vee$. Then by the Eagon-Reiner theorem, $I(\b{\cc})$ has a linear resolution over $\z_2$. Hence by
\cite[Theorem 18]{CF2} $\cc$ is CF-chorded. Now assume that $\cc$ has a CF-cycle. Then as noted in the remarks
before the theorem, $\cc$ should contain a clique on a set $L$ of $d+2$ vertices. Let $\b{L}$ denote $[n]\sm L$.
Then $\dim \b{L}=d'-1$ and by assumption, $\b{L}\in \Delta$. Whence there is a $F\in \cc'$ such that $\b{L}\se F$.
So $\b{F}\notin \cc$ and $\b{F}\se L$, which is in contradiction with $L$ being a clique.

Consequently, $\cc$ has no CF-cycles and is a $d$-dimensional CF-tree. Thus by \ref{1} and \ref{trees have leaf},
$\cc$ has a free MS, say $e$. Set $L=\b{e}$, then $|L|=n-d=d'+2$. Now a $(d'+1)$-subset $F\se L$ is a facet of
$\Delta$ \ifof $e\se \b{F}$ is not a circuit in $\cc$. As $e$ is a free MS and there is just one circuit
containing $e$, we conclude that $\Delta|_L$ is almost d-complete.

\ref{2} \give\ \ref{1}: Suppose that $\cc$ is a $d$-dimensional CF-tree. Then $\cc$ is CF-chorded and hence by
\cite[Theorem 18]{CF2}, $I(\b{\cc})$ has a linear resolution over $\z_2$ and $\Delta=\lg \cc^\vee\rg$ is
Cohen-Macaulay. Also an argument similar to the first paragraph of \ref{1} \give\ \ref{2}, shows that $\Delta$ has
all possible faces of size $d'$. So the assumptions of \ref{2} hold for $\Delta$ and it is easy to see that if $L$
is as in \ref{2}, then $\b{L}$ is a free MS of $\cc$. Therefore every $d$-dimensional CF-tree on $[n]$ has a free
MS and by \ref{trees have leaf}, the result follows.

\ref{2} $\iff$ \ref{3}: Suppose that $\Delta$ is a $d'$-dimensional simplicial complex on $[n]$, which is
$(d'-1)$-complete. We just need to show that $\Delta$ is Cohen-Macaulay over $\z_2$ \ifof it satisfies \ref{-2}.
Set $d_F= \dim \link_\Delta F= d'-|F|$. By the Reisner theorem (see \cite[Theorem 8.1.6]{hibi}), $\Delta$ is
Cohen-Macaulay \ifof for all $F\in \Delta$ and $i<d_F$, $\tl{H}_i(\link_\Delta F; \z_2)=0$. But by
\cite[Proposition 5.1]{Connon},  for an $x=\sum_{F\in \cc} F\in \tl{C}_i(\Delta)$ we have $x\in \ker \rnd_i$ \ifof
$\cc$ is a disjoint union of CF-cycles. Also it is easy to check that $\rnd_{i} (x)= \sum_{F\in \rnd(\cc)} F$.
Therefore, $\tl{H}_i(\link F; \z_2)=0$ \ifof each $i$-cycle of $\link F$ is the boundary of a family of
$(i+1)$-dimensional faces of $\link F$.

Each $\link F$ is $(d_F-1)$-complete (on the vertex set $[n]\sm F$), for $\Delta$ is $(d'-1)$-complete. Therefore,
$\tl{H}_i(\link F; \z_2)=0$ for $i<d_F-2$. Also if $|F|>d'-1$, then $d_F\leq 0$ and again $\tl{H}_i(\link F;
\z_2)=0$ for all $i<d_F$. So $\Delta$ is Cohen-Macaulay \ifof for each $F\in \Delta$ with $|F|\leq d'-1$, each
$(d_F-1)$-cycle of $\link F$ is the boundary of a set of facets of $\link F$. Let $A=\b{F}$. Then every
$(d_F-1)$-cycle of $\link F$ is clearly a CF-cycle on $A$. Also as $\link F$ contains all $d_F$-subsets of $A$,
the circuits of every possible $(d_F-1)$-cycle on vertex set $A$ are in $\link F$, that is, CF-cycles with
dimension $=d_F-1$ with vertex set $A$ are exactly $(d_F-1)$-cycles of $\link F$. Noting that $d_F-1=
|A|-(n-d'+1)$, the result follows.
\end{proof}
\begin{cor}\label{low n}
If $n\leq d+3$, then every $d$-dimensional CF-tree on $[n]$ has a free MS and is chordal.
\end{cor}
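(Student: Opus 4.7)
The plan is to invoke Corollary \ref{trees have leaf}, which reduces the statement to showing that every $d$-dimensional CF-tree on $[n]$ possesses a free MS; chordality then follows automatically. The case $n = d+1$ is immediate since the only nonempty $d$-clutter on $[d+1]$ is $\{[d+1]\}$, whose maximal subcircuits all have degree one. For the remaining cases $n \in \{d+2, d+3\}$, the plan is to apply Theorem \ref{dual} with $d' = n - d - 2 \in \{0, 1\}$, so it suffices to verify the second condition of that theorem at these two small values of $d'$.

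When $d' = 0$, a $0$-dimensional non-$0$-complete $\z_2$-Cohen--Macaulay complex on $[n]$ is nothing more than a proper nonempty subset $V(\Delta) \subsetneq [n]$. The desired $(d'+2)$-subset $L = \{u, v\}$ is obtained at once by picking $u \in V(\Delta)$ and $v \in [n] \setminus V(\Delta)$: the restriction $\Delta|_L$ then contains exactly one of the two singletons of $L$, satisfying the ``almost complete'' condition demanded by \ref{dual}.

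When $d' = 1$, the relevant complexes are precisely the connected graphs with vertex set $[n]$ that are not $K_n$, and the required $3$-subset $L$ must be such that $\Delta|_L$ has exactly one missing edge, i.e., is an induced path on three vertices. The key idea is the standard shortest-path argument: choose any non-edge $uv$ of $\Delta$ (which exists by non-completeness), take a shortest $u$--$v$ path $u = x_0, x_1, \ldots, x_k = v$ (of length $k \geq 2$), and note that minimality of the path forbids $x_0 x_2$ from being an edge, so $\{x_0, x_1, x_2\}$ is the desired $3$-subset. Having verified the second condition of \ref{dual} in both cases, its implication to the first condition completes the argument; the only step with any real substance is the shortest-path construction for $n = d+3$, everything else being routine.
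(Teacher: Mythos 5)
Your proposal is correct and follows essentially the same route as the paper: reduce to the existence of a free MS via Corollary \ref{trees have leaf}, handle $n=d+1$ directly, and verify condition (ii) of Theorem \ref{dual} for $d'=0$ and $d'=1$, where your shortest-path construction of the induced path on three vertices is the same argument the paper phrases as choosing two non-adjacent vertices at minimal distance (necessarily two) together with a common neighbour.
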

\begin{proof}
If $n=d+1$, then $\cc$ has just one circuit and every MS of $\cc$ is free.  Assume $d+1< n\leq d+3$. We show that
\ref{dual}\ref{2} holds. If $n=d+2$, then $d'=0$ in the previous theorem. For each $0$-dimensional simplicial
complex $\Delta$ on $[n]$ which is not $0$-complete there are vertices $i,j\in[n]$ with $\{i\}\in \Delta$ and
$\{j\}\notin \Delta$. So $L=\{i,j\}$ satisfies \ref{dual}\ref{2}. Now suppose $n=d+3$ and let $\Delta$ be a
non-1-complete 1-dimensional $\z_2$-Cohen-Macaulay simplicial complex which is 0-complete. It is known that every
Cohen-Macaulay complex with dimension $\geq 1$ is connected. As $\Delta$ is not complete, there are non-adjacent
vertices $x,y$ of $\Delta$. Suppose that $x,y$ have the least possible distance, that is, two. Then there is a
vertex $z$ adjacent to both $x$ and $y$. Now $L=\{x,y,z\}$ has the requirements of \ref{dual}\ref{2}.
\end{proof}

                          \section{Some Other Generalizations of Cycles and Chordal Graphs}

Suppose that $A$ is a set of vertices of $\cc$. We call $\{F\in \cc|F\se A\}$, the \emph{subclutter of $\cc$
induced by $A$}. Also if $M\se \MS(\cc)$, by the \emph{subclutter of $\cc$ induced by $M$} we mean $\{F\in \cc|
\MS(\{F\}) \se M\}$. For example if $\cc$ is the clutter in \oldref{fig1}, then the induced subclutter of $\cc$ on
$\MS(\cc)\sm\{ac\}$ equals $\cc\sm \{ace\}$. But if $A$ is the set of vertices of $\cc$ with $a$ and $c$ removed,
then $A$ induces the subclutter $\cc-a-c=\{bef,def\}$. Note that a subclutter induced by a set of vertices $A$, is
also induced by the set of MS's which are contained in $A$.

Let $i\in \{1,2,3\}$. We say $\cc$ is a \emph{\C{i}-cycle}, when either it is a complete clutter with $n=d+2$ or
$\sms(\cc)=\tohi$ and $\sms(\cc')\neq \tohi$ for each $\cc'\subsetneq \cc$ such that:
\begin{itemize}
\item for $i=1$, $\tohi \neq \cc'$;

\item for $i=2$, $\tohi\neq \cc'$ is a subclutter induced by a subset of $\MS(\cc)$;

\item for $i=3$, $\tohi\neq \cc'$ is a subclutter induced by a set of vertices of $\cc$.
\end{itemize}
It is easy to check that in the case $d=1$ all of these types of cycles coincide with the usual cycles of graphs.
Also clearly every \C1-cycle is a \C2-cycle and every \C2-cycle is a \C3-cycle. But the converse is not true as
the following examples show.
\begin{ex}\label{c3 not c2}
Let $\cc$ be the set of facets of $\Gamma$ in \cite[Example 16]{CF2}, that is, $\cc$ is the set of all 3-subsets
of $\{0,1,\ldots, 5\} $ except 012, 345. Then it is straightforward to check that $\sms(\cc)=\tohi$ and each
non-trivial vertex induced subclutter  of $\cc$ has a SMS. So $\cc$ is a \C3-cycle. But $\sms(\cc-12)=\tohi$, thus
$\cc$ is not a C2-cycle.
\end{ex}

To present an example of a \C2-cycle which is not a \C1-cycle, we need the following lemmas.
\begin{lem}\label{deg2 are c1}
If $\cc$ is a strongly connected $d$-clutter with $\deg(e)=2$ for all $e\in \MS(\cc)$, then $\cc$ is a \C1-cycle.
\end{lem}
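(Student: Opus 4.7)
The plan is to separate the argument by whether $\sms(\cc)$ is empty, matching the two clauses in the definition of a \C1-cycle. In one case $\cc$ turns out to be a complete clutter on $d+2$ vertices; in the other I must verify the extremal condition $\sms(\cc')\neq\tohi$ for every non-empty proper subclutter $\cc'\subsetneq \cc$.

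\emph{Case 1: $\sms(\cc)\neq \tohi$.} Pick $e\in \sms(\cc)$. Since $\deg(e)=2$, exactly two circuits $ev_1$ and $ev_2$ (with $v_1\neq v_2$) contain $e$, so $\N[e]=e\cup\{v_1,v_2\}$ has exactly $d+2$ elements. Simpliciality of $e$ forces every one of the $\binom{d+2}{d+1}=d+2$ possible $(d+1)$-subsets of $\N[e]$ to be a circuit. To rule out any circuit outside $\N[e]$ I would argue by contradiction: if $C\in\cc$ with $C\not\se \N[e]$, strong connectedness produces a path from $ev_1$ to $C$ along which some consecutive pair $F_j\se \N[e]$, $F_{j+1}\not\se\N[e]$ share an MS $e'\se\N[e]$. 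Then the two elements of $\N[e]\sm e'$ give two distinct circuits of $\cc$ inside $\N[e]$ containing $e'$, and $F_{j+1}$ is a third, contradicting $\deg(e')=2$. Hence (on its support) $\cc$ is the complete $d$-clutter with $n=d+2$, satisfying the first clause.

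\emph{Case 2: $\sms(\cc)=\tohi$.} Given any non-empty $\cc'\subsetneq \cc$, pick $F\in \cc\sm \cc'$ and $F^*\in \cc'$, and use strong connectedness of $\cc$ to get a path $F=F_0,F_1,\ldots,F_k=F^*$ with consecutive circuits sharing an MS. Let $j$ be the smallest index with $F_j\in \cc'$; then $F_{j-1}\notin \cc'$ and $F_j\in \cc'$ share an MS $e$. Since $\deg_\cc(e)=2$, the only two circuits of $\cc$ containing $e$ are $F_{j-1}$ and $F_j$, hence the only circuit of $\cc'$ containing $e$ is $F_j$ and $\N_{\cc'}[e]=F_j$. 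As $F_j\in \cc'$ is itself the unique $(d+1)$-subset of $\N_{\cc'}[e]$, this neighborhood is trivially a clique in $\cc'$, so $e\in \sms(\cc')$.

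The delicate step is Case 1: one has to exploit $\deg(e)=2$ twice, first to pin $|\N[e]|=d+2$ and then, combined with strong connectedness, to pull every other circuit into $\N[e]$ via the degree pigeonhole on a boundary MS. Case 2 is comparatively mechanical once a connectivity path is used to locate an MS whose degree drops from $2$ in $\cc$ to $1$ in $\cc'$.
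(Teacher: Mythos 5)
Your proof is correct and follows essentially the same route as the paper: both arguments hinge on the observation that an MS shared between a circuit inside a distinguished set (the clique $\N[e]$, resp.\ the subclutter $\cc'$) and a circuit outside it would have degree at least $3$, combined with strong connectedness. The only difference is cosmetic --- you split on $\sms(\cc)\neq\tohi$ where the paper splits on the (equivalent, given $\deg\equiv 2$) existence of a clique on more than $d+1$ vertices, and you phrase the subclutter step directly rather than contrapositively.
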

\begin{proof}
Note that if $\cc$ contains a clique $\cc'$ on more than $d+1$ vertices, then $\deg_{\cc'}(e)=2$ for each $e\in
\MS(\cc')$. Therefore $\cc\sm \cc'$ does not share any MS with $\cc'$ (else, that MS has degree $>2$). But this
contradicts the strongly connectedness of $\cc$, unless $\cc\sm \cc'= \tohi$. Hence in this case $\cc$ is complete
and because each MS has degree 2, $n=d+2$. So $\cc$ is a \C1-cycle.

Thus we can assume that $\cc$ contains no cliques. So $e\in \MS(\cc)$ is a simplicial MS \ifof $\deg(e)=1$. In
particular, $\sms(\cc)= \tohi$. If $\cc'\se \cc$ has no SMS, then by an argument similar to the above paragraph,
one concludes that $\cc'=\cc$.
\end{proof}

\begin{lem}\label{|cycle|>d+1}
Suppose that $\cc$ is a $d$-dimensional CF-cycle, then $d+2\leq |\cc|$.
\end{lem}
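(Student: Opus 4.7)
The plan is to derive the inequality using only the even-degree hypothesis, without invoking strong connectedness. Fix an arbitrary circuit $F_0 \in \cc$; it has exactly $d+1$ maximal subcircuits, say $e_0, e_1, \ldots, e_d$, obtained by deleting each vertex of $F_0$ in turn. Because $F_0$ itself contributes $1$ to $\deg(e_i)$, each $\deg(e_i)$ is a positive even integer, hence $\deg(e_i) \geq 2$. Consequently, for each $i$ there exists a circuit $F_i \in \cc$ with $F_i \neq F_0$ and $e_i \subseteq F_i$.

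The key step will be to verify that the assignment $e_i \mapsto F_i$ can be chosen to be injective, which will immediately give $|\cc| \geq 1 + (d+1) = d+2$. For this, I will argue that no circuit $F \neq F_0$ of $\cc$ can contain two distinct MSs of $F_0$. Indeed, if $e_i$ and $e_j$ are distinct $d$-subsets of the $(d+1)$-set $F_0$, then $|e_i \cup e_j| = d+1$ and $e_i \cup e_j = F_0$. So if both $e_i, e_j \subseteq F$, then $F_0 \subseteq F$, and since $|F| = |F_0| = d+1$ we must have $F = F_0$. Thus the circuits $F_0, F_1, \ldots, F_d$ are pairwise distinct elements of $\cc$, giving $|\cc| \geq d+2$.

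There is essentially no obstacle here: the only content beyond bookkeeping is the elementary set-theoretic observation that two distinct MSs of a single circuit jointly span the whole circuit, which forces the injectivity. Notice also that strong connectedness is not used in the argument, only the fact that every MS touching $\cc$ has even degree $\geq 2$.
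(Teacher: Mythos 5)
Your proof is correct and is essentially identical to the paper's: fix a circuit, use the even-degree condition to find a second circuit through each of its $d+1$ maximal subcircuits, and observe that two distinct MSs of a circuit jointly determine it, forcing these $d+1$ new circuits to be pairwise distinct. The paper leaves the injectivity step slightly more implicit ("two MS's in common hence $F=F_i$"), but the argument, including the non-use of strong connectedness, is the same.
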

\begin{proof}
Suppose that $F\in \cc$. Then $F$ contains $d+1$ MS's, say $e_1$, \ldots, $e_{d+1}$. As $\cc$ is a CF-cycle, $e_i$
should be contained in another circuit of $\cc$ which we call $F_i$. If for some $i\neq j$ we have $F_i=F_j$, then
$F_i$ and $F$ have two MS's in common and hence $F=F_i$, a contradiction. Thus all $F_i$'s are distinct and we
have found at least $d+2$ circuits in $\cc$.
\end{proof}

It should be mentioned that by \cite[Proposition 3.11]{CF1}, the number of vertices of a $d$-dimensional CF-cycle
is also at least $d+2$. Recall that \emph{Strong components} of $\cc$ are the maximal strongly connected
subclutters of $\cc$.
\begin{lem}\label{rnd=U cycles}
Suppose that $\cc'=\rnd(\cc)$ is non-empty. Then $\cc'$ is a disjoint union of CF-cycles.
\end{lem}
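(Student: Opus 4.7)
The plan is to lift this to a statement about the augmented oriented chain complex with $\z_2$-coefficients and then invoke the Connon result that has already been cited in the proof of Theorem \ref{dual}, namely \cite[Proposition 5.1]{Connon}: for any family $\G$ of $i$-dimensional faces of a simplicial complex, $\sum_{F\in \G} F \in \ker \rnd_i$ (over $\z_2$) if and only if $\G$ is a disjoint union of CF-cycles.

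The key computation is the content of the remark made right after citing Proposition 5.1, namely that if we set $x = \sum_{F\in\cc} F \in \tl{C}_d(\lg\cc\rg;\z_2)$, then for each $(d-1)$-dimensional face $e$ (i.e., each potential MS), the coefficient of $e$ in $\rnd_d(x)$ equals $\deg(e) \bmod 2$, since $e$ appears in $\rnd_d(F)$ precisely when $e\subset F$. Hence
\[
\rnd_d(x) = \sum_{e\in\rnd(\cc)} e = \sum_{e\in \cc'} e.
\]

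Once this identity is in hand, the rest is immediate: from $\rnd_{d-1}\circ \rnd_d = 0$ we conclude $\rnd_{d-1}\bigl(\sum_{e\in\cc'} e\bigr)=0$, so $\sum_{e\in\cc'} e \in \ker \rnd_{d-1}$. Applying \cite[Proposition 5.1]{Connon} now with $\G = \cc'$ (viewed as a $(d-1)$-dimensional clutter on $[n]$) yields that $\cc'$ is a disjoint union of CF-cycles, as required. The hypothesis $\cc'\neq\tohi$ is used only to ensure that this disjoint union is non-trivial, so that $\cc'$ is genuinely a union of CF-cycles in the nonempty sense used throughout.

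There is no real obstacle here beyond verifying the sign/coefficient bookkeeping, which becomes trivial in characteristic $2$; this is exactly why one works over $\z_2$. The entire argument is therefore very short, essentially a one-line consequence of $\rnd^2=0$ together with the already-cited characterization of cycles in the chain complex.
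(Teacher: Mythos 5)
Your proof is correct and follows essentially the same route as the paper: both set $x=\sum_{F\in\cc}F$ over $\z_2$, observe that $\rnd_d(x)=\sum_{e\in\rnd(\cc)}e$ because the coefficient of $e$ is $\deg(e)\bmod 2$, and then use $\rnd_{d-1}\circ\rnd_d=0$ together with \cite[Proposition 5.1]{Connon} to conclude. The only difference is cosmetic: you spell out the coefficient bookkeeping that the paper dismisses as ``easily checked.''
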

\begin{proof}
Consider $x=\sum_{F\in \cc} F\in \tl{C}_d(\lg \cc \rg, \z_2)$. Then it can easily be checked that
$y=\rnd_d(x)=\sum_{F'\in \cc'} F'$. Thus $\rnd_{d-1}(y)=0$. So according to  \cite[Propositin 5.1]{Connon}, each
strong component of $\cc'$ is a CF-cycle. Whence $\cc'$ is a disjoint union of CF-cycles.
\end{proof}

\begin{ex}\label{c2 not c1}
Let $\cc$ be the 2-clutter in \oldref{fig1}, which is a triangulation of the real projective plane. $\cc'= \cc
\cup \{abc\}$. Then $\cc'$ is a \C2-cycle but not a \C1-cycle.
\end{ex}
\begin{proof}
It follows from the previous lemma that $\cc$ is a \C1-cycle. Also it is not hard to check that no MS-induced
subclutter of $\cc$ has exactly the set $\{ab, ac, bc\}$ as the set of leaves. One can readily check that $\cc'$
has no cliques on more than 3 vertices. Thus simplicial MS's of $\cc'$ and its subclutters are exactly their
leaves. So $\sms(\cc')= \tohi$. As $\cc\se \cc'$ has no simplicial MS, we see that $\cc'$ is not a \C1-cycle.

Suppose $\cd$ is a non-trivial subclutter of $\cc'$ induced by $A\se \MS(\cc')$. If either of $ab,\, ac$ or $bc$
are in $A$, then $\cd$ is also an MS-induced subclutter of $\cc$ and has some simplicial MS. If $ab,ac,bc\notin
A$, then $\cd'= \cd\sm \{abc\}$ is an MS-induced subclutter of $\cc$ and has an MS. Note that in fact, $\cd'$ is a
CF-tree, hence $\rnd(\cd')\neq \tohi$ and \ref{rnd=U cycles} and \ref{|cycle|>d+1}, $\cd'$ has at least three MS's
with odd degree, that is, three leaves. Hence it has a free MS other than $ab$, $ac$ and $bc$, which remains free
in $\cd$, too. Consequently, every not-trivial MS-induced subclutter of $\cc'$ has a simplicial MS, whence $\cc'$
is a \C2-cycle.
\end{proof}
\begin{figure}[h]
\begin{center}
\includegraphics[scale= 0.8]{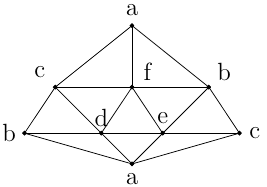}
\end{center}
\caption{$\cc$ in \ref{c2 not c1}; the circuits are the small triangles} \label{fig1}
\end{figure}

\begin{ex}
Let $\cc$ be the triangulation of the mod 3 Moore space in \oldref{fig2}. Then $\cc$ is a CF-tree without leaves.
So $\sms(\cc)=\tohi$. Also it is easy to see that every subclutter of $\cc$ has a free MS. Hence $\cc$ is a
\C1-cycle (hence \C2 and \C3) which is not a CF-cycle. If $\cc'=\cc\cup \{xyz\}$, then $\cc'$ is a CF-cycle which
is not a \C1-cycle, for the subclutter $\cc$  of $\cc'$ has no SMS's.
\end{ex}

We saw that there are clutters without any CF-cycle which do not have SMS's. But if we replace CF-cycle with \C
i-cycle, then we have:
\begin{prop}\label{acyclic => leaf}
Assume that $i\in [3]$ and no subclutter of $\cc$ is a \C{i}-cycle, then $\cc$ has a free MS. In particular, $\cc$
is chordal and $I(\b\cc)$ has a linear resolution.
\end{prop}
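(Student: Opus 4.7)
The plan is to argue the contrapositive: if $\cc$ has no free MS, I will exhibit a subclutter of $\cc$ that is a \C1-cycle. Since every \C1-cycle is a \C2-cycle and every \C2-cycle a \C3-cycle, this produces a \C{i}-cycle subclutter for every $i\in[3]$ simultaneously, contradicting the hypothesis, so a single argument handles all three cases uniformly.

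The case split is on whether $\sms(\cc)$ is empty. If $\sms(\cc)\neq \tohi$, pick any $e\in \sms(\cc)$; because $e$ is not free, $\deg(e)\geq 2$, so I may choose distinct $u,v\in \N[e]\sm e$. The set $L=e\cup\{u,v\}$ is a clique of size $d+2$ inside $\N[e]$, so every $(d+1)$-subset of $L$ is a circuit of $\cc$; the family $\cc'$ of these $(d+1)$-subsets is then (intrinsically) the complete $d$-clutter on $d+2$ vertices, hence a \C1-cycle by the first clause of the definition. If instead $\sms(\cc)=\tohi$, I take $\cc'\se \cc$ to be a non-empty subclutter minimal with $\sms(\cc')=\tohi$ (which exists since $\cc$ itself has this property). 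Minimality forces $\sms(\cc'')\neq \tohi$ for every non-empty $\cc''\subsetneq \cc'$, so $\cc'$ is a \C1-cycle by the second clause; observe that this case cannot collapse to the first clause, because the complete $(d+2)$-vertex clutter has every MS simplicial and therefore does not satisfy $\sms=\tohi$.

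For the ``in particular'' conclusion, once a free MS $e$ is produced it is automatically simplicial, since $\N[e]$ equals the unique circuit through $e$ and so is a clique of size $d+1$. Every subclutter of $\cc-e$ is again a subclutter of $\cc$, so the hypothesis is inherited by $\cc-e$, and a straightforward induction on $|\cc|$ yields a chordal elimination order for $\cc$ starting with $e$. The linear resolution of $I(\b\cc)$ is then immediate from \cite[Remark 3.10]{all}. The main work is entirely concentrated in the case analysis above; I do not foresee a genuine obstacle, since both cases simply reduce to invoking the corresponding clause of the \C1-cycle definition.
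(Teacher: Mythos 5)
Your proof is correct and takes essentially the same route as the paper's: both reduce to the case $i=1$ via the chain \C1-cycle $\Rightarrow$ \C2-cycle $\Rightarrow$ \C3-cycle, extract a complete \C1-cycle subclutter from the size-$(d+2)$ clique inside $\N[e]$ for any non-free simplicial MS $e$, and recognize a minimal non-empty subclutter with empty $\sms$ as a \C1-cycle via the second clause of the definition. The only difference is your contrapositive phrasing with an explicit case split, versus the paper's direct argument; the content is identical.
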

\begin{proof}
It suffices to prove the claim for $i=1$. Suppose that $\sms(\cc')= \tohi$ and $\cc'$ is a minimal non-empty
subclutter of $\cc$ with this property. Then $\cc'$ is a \C1-cycle, a contradiction. Hence $\sms(\cc')\neq \tohi$
for each non-empty subclutter $\cc'$ of $\cc$. Also if $\cc$ has a clique on more than $\dim \cc+1$ vertices, then
$\cc$ contains a complete \C1-cycle. Thus $\cc$ has no such cliques and whence if $e\in \sms(\cc)$, then $e$
should be free. So every non-empty subclutter of $\cc$, including  $\cc$ itself, has a free MS.
\end{proof}

A graph is chordal \ifof no induced subgraph is a non-complete cycle. Thus using either of the above notions of
cycle and using either inducing by vertices or inducing by $\MS$'s, we get generalizations of chordal graphs to
clutters. But the following examples show that none of these generalizations preserve the Fr\"oberg theorem.

\begin{ex}\label{not chordal by ci}
Let $\cc$ be the octahedron, that is, $\cc= \{1e,2e|e=34,45,56,63\}$.
\begin{enumerate}
\item $\cc'=\cc \cup \{135, 235, 435, 635\}$ is chordal and hence $I(\b{ \cc'})$ has a linear resolution over
    every field. But $\cc= \cc'-\{35\}$ is an MS-induced non-complete \C{i}-cycle ($i\in [3]$) and also CF-cycle
    of $\cc'$.

\item If $\cc''= \cc \cup \{435\}$, then every non-empty vertex induced subclutter of $\cc''$ has a free MS. So
    $\cc''$ has no vertex induced \C{i}-cycles ($i\in [3]$) or CF-cycles. But $\cc$ is not chordal and $I(\b{
    \cc''})$ has not a linear resolution over $\z_2$ because it is not CF-chorded.
\end{enumerate}
\end{ex}
On the positive side we have:
\begin{prop}\label{ci and chordality}
Consider the following statements on a $d$-clutter $\cc$.
\begin{enumerate}
\item No MS-induced subclutter of $\cc$ is a non-complete C$_2$-cycle.

\item $\cc$ is chordal.

\item No vertex induced subclutter of $\cc$ is a non-complete C$_3$-cycle.
\end{enumerate}
Then \ref{1} \give\ \ref{2} \give\ \ref{3}.
\end{prop}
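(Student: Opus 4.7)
The plan for (1) \give\ (2) is a contradiction argument. Suppose $\cc$ is not chordal. Iteratively deleting SMSs from $\cc$ must eventually halt at some non-empty $\cc_k = \cc - e_1 - \cdots - e_k$ with $\sms(\cc_k) = \tohi$, since otherwise some such chain would reach $\tohi$ and force $\cc$ to be chordal. Since $\cc_k$ is precisely the MS-induced subclutter of $\cc$ on $\MS(\cc) \sm \{e_1, \ldots, e_k\}$, the family of non-empty MS-induced subclutters of $\cc$ without an SMS is non-empty. Let $\cc^*$ be a minimal member of this family under MS-induced-subclutter containment, which exists by finiteness together with the transitivity of MS-induction. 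Then $\sms(\cc^*) = \tohi$ while every proper non-empty MS-induced subclutter of $\cc^*$ has an SMS (otherwise it would belong to the family and violate minimality); moreover $\cc^*$ is not complete, since every MS of a complete clutter is simplicial. Hence $\cc^*$ is a non-complete C$_2$-cycle MS-induced in $\cc$, contradicting (1).

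For (2) \give\ (3) the idea is to reduce everything to the fact that a non-empty chordal clutter has at least one SMS. The key lemma I aim for is that chordality is inherited by vertex-induced subclutters: if $\cc$ is chordal, then so is $\cc_A = \{F \in \cc \mid F \se A\}$ for every $A \se [n]$. Granting this, a hypothetical non-complete C$_3$-cycle $\cc_A$ would be non-empty with $\sms(\cc_A) = \tohi$, contradicting its chordality. By induction on $|[n] \sm A|$ it suffices to handle the single-vertex case: chordality of $\cc$ implies chordality of $\cc - v$ for every $v \in [n]$ (recall that for a single vertex $v$, $\cc - v$ coincides with the vertex-induced subclutter on $[n] \sm \{v\}$). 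This lemma is where I expect the main work to lie.

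To prove it, fix a chordality sequence $e_1, \ldots, e_t$ for $\cc$, set $\cc_i = \cc - e_1 - \cdots - e_i$, and let $e_{i_1}, \ldots, e_{i_s}$ (with $i_1 < \cdots < i_s$) enumerate those $e_i$ satisfying $v \notin e_i$. First I verify the identity $(\cc - v) - e_{i_1} - \cdots - e_{i_j} = \cc_{i_j} - v$: any circuit of $\cc$ not containing $v$ automatically avoids every $e_l$ with $v \in e_l$, so removing such an $e_l$ is redundant once $v$ has been removed. Second, if $e_{i_j}$ is still an MS of $\cc_{i_j-1} - v$, then it is simplicial there, since $\N_{\cc_{i_j-1} - v}[e_{i_j}] = \N_{\cc_{i_j-1}}[e_{i_j}] \sm \{v\}$ and any $(d+1)$-subset of this smaller neighborhood is a $(d+1)$-subset of $\N_{\cc_{i_j-1}}[e_{i_j}]$ avoiding $v$, hence a circuit of $\cc_{i_j-1} - v$. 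If $e_{i_j}$ is not even an MS of $\cc_{i_j-1} - v$, its deletion is vacuous and we simply skip it. Finally $\cc_{i_s} - v = \tohi$, because any circuit still in $\cc_{i_s}$ must contain some $e_l$ with $l > i_s$, and by construction such $e_l$ contains $v$. The surviving subsequence of $e_{i_j}$'s therefore witnesses chordality of $\cc - v$.
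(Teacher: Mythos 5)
Your argument is correct. For \ref{2 main 1}$\Rightarrow$... sorry, for (i) \give\ (ii) you take essentially the paper's route: the paper simply observes that (i) is equivalent to every non-empty MS-induced subclutter having an SMS and then peels off SMS's; your minimality argument (take a minimal non-empty MS-induced subclutter with $\sms=\tohi$, note it is a non-complete \C2-cycle) is just the explicit justification of that equivalence, using that MS-induction is transitive and that $\cc-e_1-\cdots-e_k$ is the MS-induced subclutter on $\MS(\cc)\sm\{e_1,\ldots,e_k\}$. For (ii) \give\ (iii), however, you take a genuinely different and in fact stronger route. The paper only proves, by induction on $|\cc|$ (using that $\cc-v=\cc-e-v$ when $v\in e\in\sms(\cc)$, and that $e$ survives as an SMS of $\cc-v$ when $v\notin e$), that $\sms(\cc-v)\neq\tohi$; it then appeals to ``any vertex-induced subclutter is obtained by consecutively deleting vertices,'' a reduction that tacitly requires chordality itself, not merely non-emptiness of $\sms$, to be inherited by $\cc-v$ before the lemma can be applied again. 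You prove exactly that stronger inheritance: you transform a chordality sequence of $\cc$ into one for $\cc-v$ by discarding the $e_i$ containing $v$, checking the commutation identity $(\cc-v)-e_{i_1}-\cdots-e_{i_j}=\cc_{i_j}-v$, verifying $\N_{\cc_{i_j-1}-v}[e_{i_j}]=\N_{\cc_{i_j-1}}[e_{i_j}]\sm\{v\}$, and correctly handling the degenerate case where $e_{i_j}$ ceases to be an MS after deleting $v$ (a case the paper's ``$e\in\sms(\cc-v)$'' glosses over, since every circuit through $e$ might contain $v$). Your version buys a cleaner statement --- vertex-induced subclutters of chordal clutters are chordal --- from which (iii) follows immediately, at the cost of slightly more bookkeeping than the paper's one-step induction.
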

\begin{proof}
\ref{1} \give\ \ref{2}: It is easy to see that \ref{1} is equivalent to $\sms(\cc') \neq \tohi$ for each non-empty
MS-induced subclutter $\cc'$ of $\cc$. So there is $e_1\in \sms(\cc)$, $e_2\in \sms(\cc-e_1)$, \ldots and $\cc$ is
chordal.

\ref{2} \give\ \ref{3}: Again \ref{3} is equivalent to $\sms(\cc') \neq \tohi$ for each non-empty vertex induced
subclutter $\cc'$ of $\cc$. As any vertex induced subclutter is obtained by consecutively deleting some vertices,
we just need to show that if $\cc-v\neq \tohi$, then $\sms(\cc-v)\neq \tohi$ for a chordal clutter $\cc$ and
vertex $v$ of $\cc$. We prove this by induction on $|\cc|$.

The case $|\cc|=1$ is trivial. Suppose $|\cc|>1$. Let $e\in \sms(\cc)$. If $v\notin e$, then $e\in \sms(\cc-v)$.
If $v\in e$, then $\cc-v= \cc -e -v$ and $\cc-e$ is a chordal clutter with a smaller number of circuits. Hence the
result follows from the induction hypothesis.
\end{proof}

It should be noted that if in \ref{ci and chordality}\ref{3}, we replace \C3 with \C2 or \C1, again the result
clearly holds. Also if we replace \C3-cycle with face-minimal CF-cycle, then again the result holds, since chordal
clutters are CF-chorded and it immediately follows the definition of CF-chorded clutters that such clutters can
not have induced non-complete face-minimal CF-cycles. But if we replace \C2-cycle in \ref{1} with \C1-cycle or
CF-cycle, then the obtained statement is not generally true. For example, $\cc'$ of \ref{c2 not c1} contains no
MS-induced \C1-cycle or CF-cycle but it is not chordal.

                                             
\includepdf[pages={1,2}]{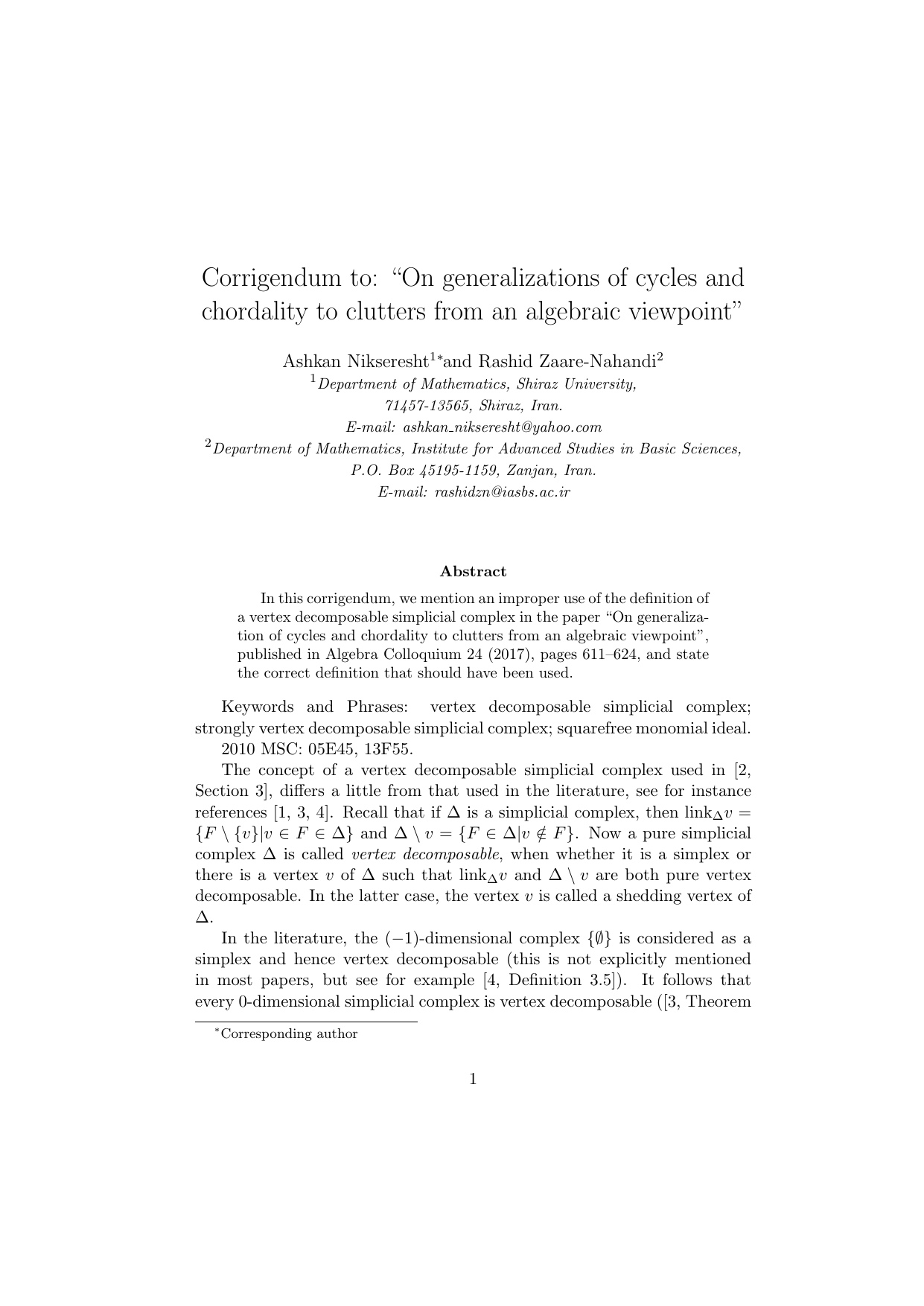}
\end{document}